\newtheorem{theorem}{Theorem}[section]
\newtheorem{proposition}[theorem]{Proposition}
\theoremstyle{definition}
\numberwithin{equation}{section}
\renewcommand\leq{\leqslant}\renewcommand\geq{\geqslant}
\newcommand\Z{\ensuremath{\mathbb Z}}
\newcommand\Q{\ensuremath{\mathbb Q}}\newcommand\R{\ensuremath{\mathbb R}}
\newcommand\Qb{{\overline\Q}}
\newcommand{\ra}{{\rightarrow}}
\newcommand{\lra}{\longrightarrow}
\newcommand\End{\operatorname{End}}
\newcommand\Gal{\operatorname{Gal}}
\newcommand\GL{\operatorname{GL}}
\newcommand\Hom{\operatorname{Hom}}
\newcommand\im{\operatorname{im}}
\newcommand\Res{\operatorname{Res}}
\newcommand\acc[2]{\ensuremath{{}^{#1}\hskip-0.1ex{#2}}}
\newcommand\sign{\operatorname{sign}}
\newcommand\cB{\ensuremath{\mathcal B}}
\newcommand{\comp}{\begin{picture}(6,5)(-3,-2)\put(0,1){\circle{2}} \end{picture}}\def\circ{\comp}
\begin{document}


\baselineskip=17pt




\title[Fields of definition of building blocks with QM]{Fields of definition of building blocks with quaternionic
multiplication}

\author[X. Guitart]{Xavier Guitart}

\address{
Deptartament de Matem\`atica Aplicada II\\ Universitat Polit\`ecnica
de Catalunya\\ Carrer Colom 11\\ 08222
Terrassa}

\email{xevi.guitart@gmail.com}

\date{\today}

\begin{abstract}
This paper investigates the fields of definition up to isogeny of
the abelian varieties called \emph{building blocks}.
In~\cite{ribet-cosdef} and~\cite{pyle}  a characterization of the
fields of definition of these varieties together with their
endomorphisms is given in terms of a Galois cohomology class
canonically attached to them. However,  when the building blocks
have quaternionic multiplication, then the field of definition of
the varieties can be strictly smaller than the field of definition
of their endomorphisms. What we do is to give a characterization
of the field of definition of  the varieties in this case (also in
terms of their associated Galois cohomology class), and we also
make the computations that are needed in order to calculate in
practice these fields from our characterization.
\end{abstract}



\maketitle

\section{Introduction}

An abelian variety $B/\Qb$ is called an \emph{abelian
$\Q$-variety} if for each $\sigma \in G_\Q=\Gal(\Qb/\Q)$ there
exists an isogeny $\mu_\sigma\colon\acc\sigma B\ra B$
\emph{compatible} with the endomorphisms of $B$, i.e. such that
$\varphi\circ\mu_\sigma=\mu_\sigma\circ\acc\sigma\varphi$ for all
$\varphi\in\End_\Qb^0(B)=\End_\Qb(B)\otimes_\Z \Q$. A
\emph{building block} is an abelian  $\Q$-variety $B$ whose
endomorphism algebra $\End_\Qb^0(B)$ is a central division algebra
over a totally real number field $F$ with Schur index $t=1$ or
$t=2$ and $t[F:\Q]=\dim B$. In the case $t=2$ the quaternion
algebra is necessarily totally indefinite. The interest in the
study of the building blocks comes from the fact that they are the
 absolutely simple factors up to isogeny of the non-CM abelian varieties of
$\GL_2$-type (see~\cite{pyle}) and therefore, as a consequence of
a generalization of Shimura-Taniyama, they are the non-CM
absolutely simple factors of the modular jacobians $J_1(N)$.

In \cite{ribet-cosdef} and in \cite{pyle} Ribet and Pyle
investigated the possible fields of definition of a building block
up to isogeny; in fact, and to be more precise, their results
concern the field of definition of the variety together with its
endomorphisms. The main result in this direction is that every
building block $B/\Qb$ is isogenous over $\Qb$ to a variety $B_0$
defined over a polyquadratic number field\footnote{i.e. a
composition of quadratic extensions of $\Q$.} $K$, and with all
the endomorphisms of $B_0$ also defined over $K$ (this is
\cite[Theorem 5.1]{pyle}). Moreover, from the proof of this result
one can deduce the structure of minimal polyquadratic number
fields with this property. In particular, each of these minimal
number fields must contain a certain field $K_P$ that can be
calculated from a cohomology class $\gamma$ in $H^2(G_\Q,F^\times
)$  canonically attached to $B$.

If $B$ is a building block whose endomorphism algebra
$\End_\Qb^0(B)$ is a number field $F$ and $B$ is defined over a
number field $K$, then all the endomorphisms of $B$ are also
defined over $K$; this follows easily from  the compatibility of
the isogenies and from the commutativity of $\End_\Qb^0(B)$.
Therefore, in this case it is not a restriction to impose on a
field of definition of $B$ to be also a field of definition of its
endomorphisms. But if $B$ has quaternionic multiplication, that is
if $\End_\Qb^0(B)$ is  a quaternion algebra, then a field of
definition of $B$ is not necessarily a field of definition of
$\End_\Qb^0(B)$. In this situation, it can occur  that $B$ is
indeed isogenous to a variety $B_0$ defined over a  field $L$
smaller than the minimal ones given by Ribet and Pyle,  but of
course with $\End_L^0(B_0)$ strictly contained in $
\End_\Qb^0(B_0)$. The easiest case where this happens is in the
abelian varieties of $\GL_2$-type that are absolutely simple and
have quaternionic multiplication over $\Qb$. They are building
blocks and any field of definition of their endomorphisms is
bigger than $\Q$, but clearly $\Q$ can be taken to be a field of
definition of these varieties up to isogeny. In section~\ref{sec:
examples} we will give more involved examples of this phenomenon,
in the sense that it will not be obvious a priori if the building
block can be defined up to isogeny over a smaller field than its
endomorphisms.

 The goal of this article is to characterize the fields of definition of
quaternionic building blocks up to isogeny, and to determine under
what  conditions  it is possible to define them over a field
strictly contained in the minimal ones given by Ribet and Pyle for
the variety and the endomorphisms. The plan of the paper is as
follows. In Section \ref{sec: Building blocks and fields of
definition} we characterize the fields of definition of $B$ up to
isogeny as those $K$ such that the restriction of $\gamma$ to
$G_K$ lies in the image of a certain map $\delta\colon
\Hom(G_K,\cB^\times/F^\times)\ra H^2(G_K,F^\times) $, where $\cB$
is the quaternion algebra $\End_\Qb^0(B)$. In Section \ref{sec:
The image of delta} we compute the image of $\delta $ for the kind
of quaternion algebras $\cB$ that appear as endomorphism algebras
of building blocks. Finally, in Section \ref{sec: examples} we
apply these results and computations to determine  the field of
definition of several concrete examples of building blocks.

\section{Building blocks and fields of definition}\label{sec: Building blocks and fields of definition}
We begin this section by recalling the main tools used in the
study of  fields of definition of building blocks. The main
references for this part are~\cite{ribet-cosdef} and~\cite{pyle}
(and see also~\cite[Section 1]{quer-MC} for a similar account of
this material).

Let $K$ be a number field.  We will say that a building block $B$
is \emph{defined over $K$} if the variety $B$ (but not necessarily
all of its endomorphisms) is defined over $K$. If $B$ is isogenous
to a building block defined over $K$ we will say that $K$ is
\emph{a field of definition of $B$ up to isogeny}, or that $B$ is
\emph{defined over $K$ up to isogeny}. Note that this is a
modification of the terminology used in~\cite{pyle}, where a field
of definition of a building block was defined to be a field of
definition of the variety and of all its endomorphisms.

Our study of the fields of definition of a building block up to
isogeny will be based on the following theorem of Ribet
(cf.~\cite[Theorem 8.1]{ribet-avQ}), that characterizes such
fields.
\begin{theorem}[Ribet]\label{th:Teorema Ribet descens cos de definicio}
Let $L/K$ be a Galois extension of fields, and let $B$ be an
abelian variety defined over $L$. There exists an abelian variety
$B_0$ defined over $K$ such that $B$ and $B_{0}$ are isogenous
over $L$ if and only if there exist isomorphisms in the category
of abelian varieties up to isogeny $\{\phi_\sigma\colon \acc\sigma
B \ra B \}_{\sigma\in\Gal(L/K)}$ satisfying that
$\phi_\sigma\circ\acc\sigma\phi_\tau\circ\phi_{\sigma\tau}^{-1}=1$.
\end{theorem}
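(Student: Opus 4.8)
\noindent\emph{Proof plan.} The plan is to dispose of one implication by a direct calculation and to obtain the other by Galois descent carried out inside the isogeny category, using Weil restriction of scalars.

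The implication producing a cocycle from $B_0$ is routine. If $B_0/K$ and $\lambda\colon B_0\times_K L\ra B$ is an isogeny over $L$, then $\acc\sigma B_0=B_0$ for every $\sigma\in\Gal(L/K)$, so $\acc\sigma\lambda$ is an isogeny $B_0\times_K L\ra\acc\sigma B$ and $\phi_\sigma:=\lambda\circ(\acc\sigma\lambda)^{-1}\colon\acc\sigma B\ra B$ is an isomorphism in the category of abelian varieties up to isogeny. Using $\acc\sigma{\phi_\tau}=\acc\sigma\lambda\circ(\acc{\sigma\tau}\lambda)^{-1}$, a one-line cancellation gives $\phi_\sigma\circ\acc\sigma{\phi_\tau}=\phi_{\sigma\tau}$, which is the asserted cocycle relation; I would simply write out that line.

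For the converse I would first make two reductions. Putting $\sigma=\tau=1$ in the cocycle relation forces $\phi_1=\mathrm{id}_B$; and since the family $\{\phi_\sigma\}$ is locally constant (as are the compatible isogenies considered throughout the paper), all the $\phi_\sigma$ are defined over a fixed finite subextension of $L/K$ through which $\sigma\mapsto\phi_\sigma$ factors, so after replacing $L$ by that subextension I may assume $L/K$ finite Galois with group $G$. Let $W=\Res_{L/K}B$, which is defined over $K$, and fix an isomorphism $W\times_K L\simeq\prod_{\sigma\in G}\acc\sigma B$ under which $\rho\in G$ carries the $\sigma$-th factor onto the $\rho\sigma$-th one through the canonical isomorphism $\acc\sigma B\simeq\acc{\rho\sigma}B$ induced by $\rho$. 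In the isogeny category over $L$, let $\iota\colon B\ra W\times_K L$ be the morphism whose $\sigma$-th component is $\phi_\sigma^{-1}\colon B\ra\acc\sigma B$; because $\phi_1=\mathrm{id}$, the projection $\pi_1$ onto the identity factor satisfies $\pi_1\circ\iota=\mathrm{id}_B$, so $\iota$ is a split monomorphism in the isogeny category. The core of the argument is then the identity $\acc\rho\iota=\iota\circ\phi_\rho$ for every $\rho\in G$: comparing components in the $\tau$-th factor, the left side gives $(\acc\rho{\phi_{\rho^{-1}\tau}})^{-1}$ and the right side gives $\phi_\tau^{-1}\circ\phi_\rho$, and these coincide because the cocycle relation evaluated at $(\rho,\rho^{-1}\tau)$ reads $\acc\rho{\phi_{\rho^{-1}\tau}}=\phi_\rho^{-1}\circ\phi_\tau$. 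Granting this, clear denominators: choose $N\geq 1$ so that $f:=N\iota$ is an honest homomorphism $B\ra W\times_K L$ and put $B':=f(B)$, an abelian subvariety of $W\times_K L$; since $\iota$ is injective up to isogeny, $f\colon B\ra B'$ is an isogeny, so $B'$ is isogenous to $B$ over $L$. For each $\rho\in G$ one has $\acc\rho{(B')}=\acc\rho{(f(B))}=\im(\acc\rho f)$, while in the isogeny category $\acc\rho f=f\circ\phi_\rho$ with $\phi_\rho$ invertible; since composing (after clearing denominators) with an isogeny does not alter the image as an abelian subvariety, $\acc\rho{(B')}=\im(f)=B'$. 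Thus $B'$ is a $G$-stable abelian subvariety of the abelian variety $W\times_K L$ defined over $K$, and by effectivity of Galois descent for abelian subvarieties over the perfect field $K$ it is of the form $B_0\times_K L$ for some abelian variety $B_0/K$; by construction $B_0$ is isogenous to $B$ over $L$.

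The step I expect to absorb most of the work is the identity $\acc\rho\iota=\iota\circ\phi_\rho$ together with the bookkeeping behind it: one has to be completely precise about how $\rho$ permutes and $\rho$-conjugates the factors of $W\times_K L\simeq\prod_\sigma\acc\sigma B$, and it is exactly there that the cocycle condition is used essentially. A technically equivalent, perhaps cleaner, packaging is to pass to the semisimple $\Q$-algebra $\End^0(W\times_K L)$: the idempotent $e:=\iota\circ\pi_1$ has image isogenous to $B$, and the displayed identity shows that the abelian subvariety up to isogeny cut out by $e$ is $G$-stable and hence descends to $K$. I would also make the reduction to finite $L/K$ explicit, since the statement is phrased for an arbitrary Galois extension.
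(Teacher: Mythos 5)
Your proof is correct, and the approach via Weil restriction of scalars is the standard (and essentially Ribet's) way to prove this descent statement; the paper itself does not reprove it but simply cites \cite[Theorem 8.1]{ribet-avQ}. The key computation $\acc\rho\iota=\iota\circ\phi_\rho$ is carried out correctly, and the reduction to a finite subextension is the right move. Two small remarks: the hypothesis that $K$ be \emph{perfect} is unnecessary for the final descent step --- effectivity of Galois descent along a finite Galois extension holds for quasi-projective varieties over any base field, so one should simply invoke that; and the clearing-of-denominators step deserves one more sentence, namely that $\acc\rho f$ and $f\circ(M\phi_\rho)$ (for $M$ killing the denominator of $\phi_\rho$) have the same image because multiplication by $M$ on $\acc\rho B$ and the isogeny $M\phi_\rho\colon\acc\rho B\to B$ are both surjective, which is exactly what justifies the phrase ``composing with an isogeny does not alter the image.''
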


Given a building block $B$ we fix for every $\sigma\in G_\Q$ a
compatible isogeny $\mu_\sigma\colon  \acc\sigma B\ra B$.  Since
$B$ has a model defined over a number field, we can choose the
collection $\{\mu_\sigma\}$ to be locally constant. For
$\sigma,\tau\in G_\Q$ the isogeny
$c_B(\sigma,\tau)=\mu_\sigma\circ\acc\sigma\mu_\tau\circ\mu_{\sigma\tau}^{-1}$
lies in the center $F$ of $\End_\Qb^0(B)$, and the map
$(\sigma,\tau)\mapsto c_B(\sigma,\tau)$ is a continuous  2-cocycle
of $G_\Q$ with values in $F^\times $ (equipped with the trivial
$G_\Q$-action). Its cohomology class $[c_B]$ is an element of
$H^2(G_\Q,F^\times )$ that does not depend on the particular
choice of the compatible isogenies $\mu_\sigma$, and if $B\sim_\Qb
B'$ are isogenous building blocks then we can identify their
associated cohomology classes $[c_B]$ and $[c_{B'}]$. An important
property of $[c_B]$ is that it belongs to the \mbox{2-torsion}
subgroup $H^2(G_\Q,F^\times )[2]$; that is, there exists a
continuous map $\sigma\mapsto d_\sigma\colon G_\Q\ra F^\times  $
such that $c(\sigma,\tau)^2=d_\sigma d_\tau d_{\sigma\tau}^{-1}$.
The cohomology class $[c_B]$ gives all the information about the
field of definition of a building block together with its
endomorphisms up to isogeny, thanks to the following consequence
of Theorem \ref{th:Teorema Ribet descens cos de definicio}, which
is~\cite[Proposition 5.2]{pyle}.
\begin{proposition}[Ribet-Pyle] Let $B$ be a building block and
$\gamma=[c_B]$ its associated cohomology class. There exists a
variety  $B_0$ defined over a number field $K$ and with all its
endomorphisms defined over $K$ that is $\Qb$-isogenous to $B$  if
and only if $\Res_\Q^K(\gamma)=1$, where $\Res_\Q^K$ is the
restriction map $\Res_\Q^K\colon  H^2(G_\Q,F^\times )\ra
H^2(G_K,F^\times )$.
\end{proposition}

This characterization of the fields of definition of $B$ and its
endomorphisms  up to isogeny in terms of $[c_B]$ is useful because
the group $H^2(G_\Q,F^\times )[2]$ has a particularly simple
structure, that we now recall. A \emph{sign map} for $F$ is a
group homomorphism $\sign\colon F^\times \ra \{\pm 1\}$ such that
$\sign(-1)=-1$.  A sign map gives a group isomorphism $F^\times
\simeq P\times\{\pm 1\}$, where $P=F^\times /\{\pm 1\}$. From now
on we fix a $\sign$ map for $F$ by fixing an embedding of $F$ in
$\R$, and then taking the usual sign. The corresponding
isomorphism $F^\times \simeq P\times \{\pm 1\}$ gives then a
decomposition of $H^2(G_\Q,F^\times )[2]$.
\begin{proposition}\label{prop:descomposicio del H2}
Let $F$ be a totally real number field, and let $P$ be the group
$F^\times /\{\pm 1 \}$. There exists  a (non-canonical)
isomorphism of groups
\begin{equation}\label{eq: isomorfisme de la dos torsio del grup de cohomologia}
H^2(G_\Q,F^\times )[2]\simeq H^2(G_\Q,\{\pm 1\})\times
\Hom(G_\Q,P/P^2).
\end{equation}
If $\gamma=[c]\in H^2(G_\Q,F^\times )[2]$ we denote by
$\gamma_\pm\in H^2(G_\Q,\{\pm 1\})$ and $\overline{\gamma}\in
\Hom(G_\Q,P/P^2)$ its two components under the
isomorphism~\eqref{eq: isomorfisme de la dos torsio del grup de
cohomologia}. They can be computed in the following way:
\begin{enumerate}
\item The cohomology class $\gamma_\pm$ is represented by the cocycle
$(\sigma,\tau)\mapsto \sign(c(\sigma,\tau))$.
\item If
$c(\sigma,\tau)^2=d_\sigma d_\tau d_{\sigma\tau}^{-1}$ is an
expression of $c^2$ as a coboundary, the map $\overline{\gamma}$
is given by $\sigma\mapsto d_\sigma\ \mathrm{mod}\  \{\pm
1\}F^{*2}$.
\end{enumerate}
\end{proposition}
\begin{proof}
This is essentially the content of the propositions 5.3 and 5.6
in~\cite{pyle}.
\end{proof}

Let $B$ be a building block and $\gamma=[c_B]$ its associated
cohomology class. A field $K$ is a field of definition up to
isogeny of $B$ and of its endomorphisms if and only if $K$
trivializes both components $\overline \gamma$ and $\gamma_\pm$
(that is, if and only if the restriction of both components to
$G_K$ is trivial). Let $K_P$ be the fixed field of $\ker
\overline\gamma$, which is a polyquadratic extension of $\Q$. Then
$K$ trivializes $\overline \gamma$ if and only if it contains
$K_P$. Since $H^2(G_\Q,\{\pm 1 \})$ is isomorphic to the
$2$-torsion of the Brauer Group of $\Q$, we can identify
$\gamma_\pm$ with a quaternion algebra over $\Q$, and $K$
trivializes $\gamma_\pm$ if and only if it is a splitting field of
the quaternion algebra represented by $\gamma_\pm$. If $K_P$
already trivializes $\gamma_\pm$, then $K_P$ is the minimum field
of definition of $B$ and of its endomorphisms up to isogeny.
Otherwise, there is no such a minimum field: all the fields of
definition of $B$ and of its endomorphisms up to isogeny must
contain $K_P$ and are splitting fields of $\gamma_\pm$. For
instance, for each maximal subfield $K_\pm$ of the quaternion
algebra given by $\gamma_\pm$, the field $K_\pm K_P$ is a minimal
polyquadratic number field with the property of being a field of
definition of $B$ and of its endomorphisms up to isogeny.

We can also use the cohomology class $[c_B]$ in order to study the
fields of definition of $B$ up to isogeny, in a similar way as
it is done for the fields of definition of $B$ and of its
endomorphisms. From now on we assume that  $\cB=\End_\Qb^0(B)$ is
a quaternion algebra. Before stating our cohomological version of
Theorem \ref{th:Teorema Ribet descens cos de definicio} for
building blocks, we recall that  the exact sequence of trivial
$G_K$-modules
$$1\buildrel{}\over\lra F^\times \buildrel{}\over\lra
\cB^\times\buildrel{}\over\lra
\cB^\times/F^\times\buildrel{}\over\lra 1$$
 gives rise to the cohomology exact sequence of pointed sets (cf.
\cite[p. 125]{Se})
\begin{equation*} \cdots\lra H^1(G_K,F^\times) \buildrel{}\over\lra
H^1(G_K,\cB^\times) \buildrel{}\over\lra H^1(G_K,\cB^\times
/F^\times) \buildrel{\delta}\over\lra
H^2(G_K,F^\times).\end{equation*} Since we consider the trivial
$G_K$-action, we can identify $H^1(G_K,\cB^\times /F^\times)$ with
$\Hom(G_K,\cB^\times /F^\times)$ up to conjugation.  The explicit description of the
connecting map $\delta$ is given in terms of cocycles by
\begin{equation}\label{eq: descripcio explicita
de delta particularitzada a building blocks}
\begin{array}{clcl}
 \delta\colon  & \Hom(G_K,\cB^\times/F^\times)&\lra & H^2(G_K,F^\times)\\
  & [\sigma\mapsto \psi_\sigma F^\times]&\longmapsto & [(\sigma,\tau)\mapsto \psi_\sigma\circ
  \psi_\tau\circ
  \psi_{\sigma\tau}^{-1}].
\end{array}
\end{equation}

\begin{proposition}\label{prop: resultat principal}
Let $B$ be a building block and $\gamma=[c_B]\in H^2(G_\Q,F^\times
)$ its associated \mbox{cohomology} class. There exists a variety
$B_0$  defined over a number field $K$ that is $\Qb$-isogenous to
$B$ if and only if there exists a continuous  morphism
$\psi:G_K\ra \cB^\times /F^\times $ such that
$\Res_\Q^K(\gamma)=\delta(\psi)$.
\end{proposition}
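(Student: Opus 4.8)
The statement is meant to be read off by combining two things already in place: the proposition at the end of Section~2, which says that $K$ is a field of definition of $B$ up to isogeny exactly when the exact sequence~\eqref{eq:successio exacta descens cos de definicio} admits a continuous splitting, and Corollary~\ref{cor:caracteritzacio 2 de l'splitting d'una successio exacta}, which characterizes the splitting of a (possibly non-abelian kernel) sequence. So the plan is simply to make explicit, for the sequence~\eqref{eq:successio exacta descens cos de definicio}, all the data entering that corollary.

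First I would fix a locally constant family $\{\mu_\sigma\colon\acc\sigma B\ra B\}$ of compatible isogenies and take $s\colon\sigma\mapsto\mu_\sigma$ as a candidate section $G_K\ra E$; since $E$ carries the discrete topology, local constancy makes $s$ continuous, and $\pi\circ s=\Id$ is clear. Then I would check that $s$ is central: $c_s(\sigma,\tau)=\mu_\sigma\circ\acc\sigma\mu_\tau\circ\mu_{\sigma\tau}^{-1}=c_B(\sigma,\tau)$ lies in $F^*$, which is the center of $\cB^*$, so condition~(1) of Corollary~\ref{cor:caracteritzacio 2 de l'splitting d'una successio exacta} holds. (This is the step where the hypothesis that $B$ is a building block, hence a $\Q$-variety, is used; for a general abelian variety a central section need not exist.) Next I would identify the remaining objects: the action $\Theta_s(\sigma)\colon\varphi\mapsto\mu_\sigma\varphi\mu_\sigma^{-1}$ of $G_K$ on $\cB^*$ is trivial by the compatibility relation $\varphi\circ\mu_\sigma=\mu_\sigma\circ\acc\sigma\varphi$, hence the induced action $\theta$ on $Z=F^*$ is trivial and $H^2(G_K,Z;\theta)=H^2(G_K,F^*)$ with trivial $G_K$-action — the group in which $\Res_\Q^K(\gamma)$ lives. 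Under a trivial action $H^1(G_K,\cB^*/F^*;\Theta_s)=\Hom(G_K,\cB^*/F^*)$, and~\eqref{eq: descripcio explicita del morfisme delta} specializes to the explicit formula~\eqref{eq: descripcio explicita de delta particularitzada a building blocks} for $\delta$. Finally, since $c_s$ agrees with $c_B$ on $G_K\times G_K$, the class $[c_s]\in H^2(G_K,F^*)$ equals $\Res_\Q^K(\gamma)$.

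With all this in hand, condition~(2) of Corollary~\ref{cor:caracteritzacio 2 de l'splitting d'una successio exacta} reads: the coset $\Res_\Q^K(\gamma)\,\im(\delta)\subseteq H^2(G_K,F^*)$ contains the trivial class, i.e. there is a continuous homomorphism $\psi\colon G_K\ra\cB^*/F^*$ with $\Res_\Q^K(\gamma)\,\delta(\psi)=1$; as $H^2(G_K,F^*)$ is abelian this is the same as $\delta(\psi)\,\Res_\Q^K(\gamma)=1$. Combining conditions~(1) and~(2) with the Section~2 proposition that equates being a field of definition up to isogeny with the continuous splitting of~\eqref{eq:successio exacta descens cos de definicio} gives precisely the equivalence in the statement. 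The only point that calls for an actual (one-line) verification rather than bookkeeping is the triviality of $\Theta_s$, and hence of $\theta$, directly from the definition of compatible isogeny; I do not expect any genuine obstacle here.
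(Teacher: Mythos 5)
Your argument is correct and matches the paper's own route: the paper also derives the proposition by taking $s\colon\sigma\mapsto\mu_\sigma$ as a central section, noting that compatibility makes $\Theta_s$ (and hence $\theta$) trivial so that $H^1(G_K,\cB^*/F^*;\Theta_s)=\Hom(G_K,\cB^*/F^*)$ and $[c_s]=\Res_\Q^K(\gamma)$, and then applying Corollary~\ref{cor:caracteritzacio 2 de l'splitting d'una successio exacta} together with the Section~2 proposition that equates field of definition up to isogeny with a continuous splitting of~\eqref{eq:successio exacta descens cos de definicio}. The only caveat is cosmetic: the conjugation in $E$ gives $\Theta_s(\sigma)(\varphi)=\mu_\sigma\circ\acc\sigma\varphi\circ\mu_\sigma^{-1}$ (not the literal composition $\mu_\sigma\varphi\mu_\sigma^{-1}$, which does not typecheck when $\acc\sigma B\neq B$), but this is exactly what compatibility makes equal to $\varphi$, so the conclusion stands.
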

\begin{proof}
By Theorem \ref{th:Teorema Ribet descens cos de definicio} the
existence of a variety $B_0$ defined over $K$ and isogenous to $B$
is equivalent to the existence of isomorphisms of abelian
varieties up to isogeny $\phi_\sigma:\acc\sigma B\ra B$ such that
\begin{equation}\label{eq: cond ribet}
\phi_\sigma\circ\acc\sigma\phi_\tau\circ\phi_{\sigma\tau}^{-1}=1,\end{equation}
for all $\sigma,\tau\in G_K$. If $\mu_\sigma:\acc\sigma B\ra B$ is
a compatible isogeny, then $\phi_\sigma$ is equal to
$\psi_\sigma\circ\mu_\sigma$ for some $\psi_\sigma$ belonging to
$\cB^\times$. Using the compatibility of the $\mu_\sigma$'s we
have that \eqref{eq: cond ribet} is then equivalent to
$$\mu_\sigma\circ\acc\sigma\mu_\tau\circ\mu_{\sigma\tau}^{-1}\circ\psi_\sigma\circ\psi_\tau\circ\psi_{\sigma\tau}^{-1}=1
,$$ for all $\sigma,\tau\in G_K$.
 Since $c_B(\sigma,\tau)=\mu_\sigma\circ\acc\sigma\mu_\tau\circ\mu_{\sigma\tau}$
belongs to $F^\times$, we see that the map $\sigma\mapsto
\psi_\sigma F^\times$ is a morphism $\psi:G_K\ra
\cB^\times/F^\times$, and that
$\Res_\Q^K([c_B])\cdot\delta(\psi)=1$. From this the result
follows, because $[c_B]$ is a $2$-torsion element.
\end{proof}

Now suppose that $K$ is a minimal polyquadratic field of
definition of $B$ and of all its endomorphisms. As we have seen,
it might exist a variety $B_0$ defined over a subfield $L$ of $K$
that is isogenous to $B$, but in this case with
$\End_L^0(B_0)\varsubsetneq \End_\Qb^0(B_0)$. An interesting case
of this situation is when the endomorphisms of $B_0$ are defined
over $K$, but then the field $L$ cannot be much smaller than $K$,
as we can see in the following
\begin{proposition}\label{prop: descens del cos de definicio mantenint el cos de
definicio dels endomorfismes} Let $B$ be a building block such
that $B$ and its endomorphisms are defined  over a minimal
polyquadratic field $K$. Let $L\varsubsetneq K$ and let $B_0$ be an abelian variety over $L$. The abelian variety $B_0$
is $K$-isogenous to $B$ and has all of its endomorphisms defined over $K$ if and only if there exists
a continuous homomorphism $\psi:G_L\ra \cB^\times/F^\times$ such
that $\Res_\Q^L(\gamma)=\delta(\psi)$ and $G_K\subseteq
\ker(\psi)$. In particular $\Gal(K/L)\simeq C_2$ or
$\Gal(K/L)\simeq C_2\times C_2$.
\end{proposition}
\begin{proof} Let $\kappa:B\ra B_0$ be an isogeny defined over $K$, where
$B_0$ is defined over $L$ and $\End_\Qb^0(B_0)=\End^0_K(B_0)$. For
$\sigma\in G_L $ let
$\nu_\sigma=\kappa^{-1}\circ\acc\sigma\kappa$, and let
$\psi_\sigma=\nu_\sigma\circ\mu_\sigma^{-1}$ where $\mu_\sigma$ is
a compatible isogeny for $B$. Since
$\nu_\sigma\circ\acc\sigma\nu_\tau\circ\nu_{\sigma\tau}^{-1}=1$
for all $\sigma,\tau \in G_L$, we see that
$\Res_\Q^L(\gamma)=\delta(\psi)$. Moreover, for $\sigma\in G_K$
the isogeny $\mu_\sigma$ lies in $F^\times$ and $\nu_\sigma=1$, so
$\psi_\sigma$ belongs to $F^\times$.

 For the other implication, for
$\sigma\in G_L$ let $\nu_\sigma=\psi_\sigma\circ\mu_\sigma$, with
$\mu_\sigma$ a compatible isogeny. Under the conditions of the
proposition, there exists a variety $B_0$ defined over $L$ and an
isogeny $\kappa:B\ra B_0$ such that
$\nu_\sigma=\kappa^{-1}\circ\acc\sigma\kappa$. Then any
endomorphism of $B_0$ is of the form
$\kappa\circ\varphi\circ\kappa^{-1}$ for some $\varphi\in
\End_\Qb^0(B)$. Then for $\sigma\in G_K$ we have that
\begin{eqnarray*}
\acc\sigma(\kappa\circ\varphi\circ\kappa^{-1})&=&\acc\sigma\kappa\circ\acc\sigma\varphi\circ\acc\sigma\kappa^{-1}
=\kappa\circ\psi_\sigma\circ\mu_\sigma\circ\acc\sigma
\varphi\circ\mu_\sigma^{-1}\circ\psi_\sigma^{-1}\circ\kappa^{-1}\\&=&\kappa\circ\psi_\sigma\circ
\varphi\circ\psi_\sigma^{-1}\circ\kappa^{-1}=\kappa\circ\varphi\circ\kappa^{-1}.
\end{eqnarray*}
Finally, the last statement follows because $\Gal(K/L)$ must be
isomorphic to a subgroup of $\cB^\times/F^\times$, and all abelian
groups of exponent $2$  contained in $\cB^\times/F^\times$ are
isomorphic to
 either $C_2$ or $C_2\times C_2$ (see Proposition \ref{prop: subgrups finits de B*/F*} for a classification of all
finite subgroups of $\cB^\times/F^\times$).
\end{proof}

\section{The image of $\delta$}\label{sec: The image of delta}
This section is devoted to compute all the elements in
$H^2(G_K,F^\times )[2]$ that are of the form $\delta(\psi)$ for
some continuous morphism $\psi\colon G_K\ra \cB^\times /F^\times
$, and to determine their components $\delta(\psi)_\pm$ and
$\overline{\delta(\psi)}$ under the isomorphism $H^2(G_K,F^\times
)[2]\simeq H^2(G_K,\{\pm 1\})\times \Hom (G_K,P/P^2)$ (this
isomorphism is just the restriction of~\eqref{eq: isomorfisme de
la dos torsio del grup de cohomologia} to $G_K$). The image of a
continuous morphism $\psi\colon G_K\ra \cB^\times /F^\times $ is a
finite subgroup of $\cB^\times /F^\times $. In~\cite[Section
2]{C-F} these subgroups are studied and,  in particular, we have
the following result.
\begin{proposition}[Chinburg-Friedman]\label{prop: subgrups finits de B*/F*} Let $\cB$ be a totally indefinite division
quaternion
algebra over a field $F$. The finite subgroups of $\cB^\times
/F^\times $ are cyclic or dihedral. There always exist subgroups
of $\cB^\times /F^\times $ isomorphic to $C_2$ and $C_2\times
C_2$. For $n>2$, if $\zeta_n$ is a primitive $n$-th root of unity
in $\overline F$, $\cB^\times /F^\times $ contains a subgroup
isomorphic to the cyclic group $C_n$ of order $n$ if and only if $\zeta_n+\zeta_n^{-1}$
belongs
to  $ F$ and $F(\zeta_n)$ is isomorphic to a maximal subfield of
$\cB$. In this case, $\cB^\times /F^\times $ always contains a
subgroup isomorphic to the dihedral group $D_{2n}$ of order $2n$.
\end{proposition}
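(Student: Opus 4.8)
The plan is to prove the four assertions in turn, using total indefiniteness only in the structural statement and the total reality of $F$ only at a single point of the criterion for $C_n$. \emph{The structural statement, and the classes $C_2$, $C_2\times C_2$.} Since $\cB$ is totally indefinite, fixing a real place $v$ of $F$ gives $\cB\otimes_{F,v}\R\cong\M_2(\R)$, hence an $F$-algebra embedding $\cB\hookrightarrow\M_2(\R)$ carrying $F$ into the scalar matrices; an element of $\cB^*$ with scalar image commutes with all of $\cB$, so lies in $F^*$, and therefore $\cB^*/F^*$ injects into $\PGL_2(\R)$. It then suffices to recall that the finite subgroups of $\PGL_2(\R)$ are cyclic or dihedral --- the preimage of such a subgroup in $\{A\in\GL_2(\R):\det A=\pm1\}$ is finite, hence conjugate into $\mathrm{O}(2)$ after averaging an inner product, and the finite subgroups of $\mathrm{O}(2)$, together with their images under any quotient, are cyclic or dihedral --- so the same holds for the finite subgroups of $\cB^*/F^*$. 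For the permanent presence of $C_2$: the reduced-trace-zero subspace of $\cB$ is $3$-dimensional, so it contains some $\alpha\notin F$, and then $\alpha^2=-\nr(\alpha)\in F^*$, so $\bar\alpha$ has order $2$. For $C_2\times C_2$: writing $\cB=\big(\tfrac{a,b}{F}\big)$ with standard generators $i,j$, the classes $\bar\imath,\bar\jmath$ have order $2$, are distinct, and commute because $-1\in F^*$, so they span a copy of $C_2\times C_2$.

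\emph{An arithmetic lemma.} The remaining two parts rest on the observation that for any quadratic subfield $L$ of $\cB$ with $\Gal(L/F)=\langle\sigma\rangle$, the map $\bar x\mapsto x/\sigma(x)$ is an isomorphism $L^*/F^*\xrightarrow{\ \sim\ }\ker N_{L/F}\subseteq L^*$ (well-defined and injective by inspection, surjective by Hilbert~90); in particular the order of $\bar x$ in $\cB^*/F^*$ equals the order of the root of unity $x/\sigma(x)$.

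\emph{The criterion for $C_n$, $n>2$.} For the forward direction I would argue as follows. If $\bar g$ generates a $C_n$, then $F[g]$ is a subfield of $\cB$ and not $F$ (else $\bar g=1$), so $L:=F(g)$ is a maximal subfield. From $g^n\in F^*$ one gets $\omega:=\sigma(g)/g$ with $\omega^n=1$ and $\sigma(\omega)=\omega^{-1}$, and the values $\omega=\pm1$ are excluded (they would make $\bar g$ of order $1$ or $2$). So $\omega$ has order $m\ge3$ and $L=F(\omega)=F(\zeta_m)$, while $N_{L/F}(g)=\omega g^{2}\in F^*$ gives $\bar g^{\,2}=\overline\omega^{\,-1}$ in $L^*/F^*$; the lemma then yields $n/\gcd(n,2)=m/\gcd(m,2)$, which with $m\mid n$ forces $F(\zeta_m)=F(\zeta_n)$ in each parity case. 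Thus $F(\zeta_n)\cong L$ is a maximal subfield, so $[F(\zeta_n):F]=2$; since $n>2$ and $F$ is totally real, $F(\zeta_n)$ is totally imaginary while $F(\zeta_n+\zeta_n^{-1})$ is totally real, so the latter is a proper subfield of $F(\zeta_n)$, hence equals $F$, i.e.\ $\zeta_n+\zeta_n^{-1}\in F$. For the converse, assume $\zeta_n+\zeta_n^{-1}\in F$ and identify $F(\zeta_n)$ with a maximal subfield $L\subseteq\cB$; then $[L:F]=2$, the nontrivial $\sigma\in\Gal(L/F)$ sends $\zeta_n$ to $\zeta_n^{-1}$, so $N_{L/F}(\zeta_n)=1$, and by Hilbert~90 $\zeta_n=\beta/\sigma(\beta)$ for some $\beta\in L^*$; by the lemma $\bar\beta$ has order $n$.

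\emph{Presence of $D_{2n}$, and the crux.} Keeping $L=F(\zeta_n)$ and $\beta$ as above, Skolem--Noether applied to the $F$-embeddings $L\hookrightarrow\cB$ and $L\xrightarrow{\ \sigma\ }L\hookrightarrow\cB$ yields $j\in\cB^*$ with $jxj^{-1}=\sigma(x)$ for all $x\in L$; then $j^{2}$ centralises $L$, so $j^{2}\in C_\cB(L)=L$ by the double centraliser theorem, and it is $\sigma$-fixed, so $j^{2}\in F^*$, whence $\bar\jmath$ has order $2$ (nontrivial, acting on $L$ as $\sigma$). Finally $\bar\jmath\,\bar\beta\,\bar\jmath^{-1}=\overline{\sigma(\beta)}=\bar\beta^{-1}$ (the last equality because $\sigma(\beta)\beta=N_{L/F}(\beta)\in F^*$) and $\bar\jmath\notin\langle\bar\beta\rangle$ since $j\notin L$, so $\langle\bar\beta,\bar\jmath\rangle\cong D_{2n}$. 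The step I expect to demand the most care is the forward direction of the criterion for $C_n$: matching the quadratic field $L$ generated by a lift of an order-$n$ element with $F(\zeta_n)$ (the parity bookkeeping relating $m$ and $n$) and then extracting $\zeta_n+\zeta_n^{-1}\in F$ from the total reality of $F$, which is the only use of that hypothesis besides the embedding into $\PGL_2(\R)$.
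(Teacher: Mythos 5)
The paper does not prove this proposition; it is quoted from Chinburg--Friedman \cite[Section 2]{C-F} without argument, so there is no paper proof to compare against and your proposal must stand on its own. Read on its own terms, your proof is correct and is essentially the standard one: the $\PGL_2(\R)$ embedding (available precisely because $\cB$ is totally indefinite) reduces the classification of finite subgroups to the classical fact about $\mathrm{O}(2)$, and the Hilbert~90 isomorphism $L^*/F^*\cong\ker N_{L/F}$ plus Skolem--Noether handles both the $C_n$ criterion and the $D_{2n}$ construction. Two small remarks. First, the parity bookkeeping in the forward direction of the $C_n$ criterion is an unnecessary detour: your own lemma applied directly to $\bar g$ (rather than to $\bar g^2=\bar\omega^{-1}$) gives $\mathrm{ord}(\bar g)=\mathrm{ord}(g/\sigma(g))=\mathrm{ord}(\omega^{-1})=m$, hence $n=m$ immediately, and then $L=F(\omega)=F(\zeta_n)$ with no case analysis. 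Second, as you flag, you use total reality of $F$ both for the $\PGL_2(\R)$ embedding and to conclude $\zeta_n+\zeta_n^{-1}\in F$ from $[F(\zeta_n):F]=2$; the proposition as literally stated says only ``a field $F$,'' but since the paper applies it exclusively with $F=Z(\End^0 B)$ totally real, this is a harmless strengthening of hypotheses for the purposes at hand, and in any case ``totally indefinite'' already presupposes real places. One tiny gap worth a line: for the $C_2\times C_2$ case you should note that $\bar\imath,\bar\jmath,\overline{\imath\jmath}$ are pairwise distinct and nontrivial, which follows from $\imath\jmath=-\jmath\imath\neq\jmath\imath$ (so $\imath\notin F\jmath$) and $\cB$ being a division algebra; without that remark the subgroup could a priori be smaller than order $4$.
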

In order to compute the cohomology classes $\delta(\psi)$ we will
consider four separate cases, depending on whether $\im \psi$ is
isomorphic to $C_2$, $C_2\times C_2$, $C_n$ or $D_{2n}$ for $n>2$.
The following notation may be useful: if $G$ is a group, we denote
by $\Delta_G$ the elements $\gamma\in H^2(G_K,F^\times )[2]$ that
are of the form $\gamma=\delta(\psi)$ for some morphism $\psi$
with $\im \psi\simeq G$.

As usual we will identify the elements in $H^2(G_K,\{\pm 1\})$
with quaternion algebras over $K$, and we will use the notation
$(a,b)_K$ for the quaternion algebra generated over $K$ by $i,j$
with $i^2=a$, $j^2=b$ and $ij+ji=0$. As for the elements in
$\Hom(G_K,P/P^2)$ we will use the notation $(t,d)_P$  with $t\in
K$ and $d\in F^\times $, to denote (the inflation of)  the
morphism that sends the non-trivial automorphism of
$\Gal(K(\sqrt{t})/K)$ to the class of $d$ in $P/P^2$. Every
element in $\Hom(G_K,P/P^2)$ is the product of morphisms of this
kind, and therefore it can be expressed in the form
$(t_1,d_1)_P\cdot(t_2,d_2)_P\cdots (t_n,d_n)_P$ for some $t_i\in
K$, $d_i\in F^\times $. We remark that, although they are
convenient for their compactness, these expressions for the
elements of $\Hom(G_K,P/P^2)$ are not unique.

\begin{proposition}\label{prop: C2}
An element  $\gamma\in H^2(G_K,F^\times)[2]$ belongs to
$\Delta_{C_2}$ if and only if
\begin{itemize}
\item $\overline \gamma=(t,b)_P$, for some $t\in K\setminus K^2$ and
$b\in F^\times$  such that $F(\sqrt b)$ is isomorphic to a maximal
subfield of $\cB$.
\item $\gamma_\pm=(t,\sign(b))_K$.
\end{itemize}
\end{proposition}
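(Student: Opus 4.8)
The plan is to exploit the explicit description of $\delta$ in \eqref{eq: descripcio explicita de delta particularitzada a building blocks} together with Proposition~\ref{prop: subgrups finits de B*/F*}. Suppose first that $\gamma = \delta(\psi)$ with $\im\psi \simeq C_2$. A morphism $\psi\colon G_K \ra \cB^*/F^*$ with image $C_2$ is the same as a choice of $t \in K\setminus K^2$ (the quadratic character cutting out $K(\sqrt t)/K$) together with an element $\beta F^* \in \cB^*/F^*$ of order $2$; so $\psi$ is the inflation of the map sending the nontrivial element of $\Gal(K(\sqrt t)/K)$ to $\beta F^*$. Since $\beta^2 \in F^*$ and $\beta \notin F^*$, the subfield $F(\beta) \subseteq \cB$ is a quadratic (hence maximal) subfield, and writing $b = \beta^2 \in F^*$ we have $F(\beta) \simeq F(\sqrt b)$, a maximal subfield of $\cB$. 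First I would compute $\delta(\psi)$ from \eqref{eq: descripcio explicita de delta particularitzada a building blocks}: picking the obvious section $\sigma \mapsto \psi_\sigma \in \{1,\beta\}$ of the character, the cocycle $(\sigma,\tau)\mapsto \psi_\sigma\psi_\tau\psi_{\sigma\tau}^{-1}$ takes the value $b = \beta^2$ exactly when $\sigma,\tau$ both restrict to the nontrivial element of $\Gal(K(\sqrt t)/K)$, and $1$ otherwise. This is precisely (a cocycle representing) the cup-product-type class that one denotes $(t,b)$ with values in $F^*$.

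Next I would read off its two components under Proposition~\ref{prop:descomposicio del H2}. For $\overline\gamma$: since the cocycle already takes values in the cyclic group $\langle b\rangle \subseteq F^*$, an explicit coboundary expression for its square is immediate (if $b$ has infinite order the cocycle $c$ itself can be rescaled, but more simply $c^2 = \partial d$ with $d_\sigma = b$ or $1$ according to the value of the quadratic character), so $\overline\gamma$ sends the nontrivial element of $\Gal(K(\sqrt t)/K)$ to $b \bmod \{\pm1\}F^{*2}$; in the notation of the paper this is $(t,b)_P$. For $\gamma_\pm$: we apply $\sign$ to the cocycle, which replaces $b$ by $\sign(b)$, giving the class of the quaternion algebra $(t,\sign(b))_K$. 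This establishes the "only if" direction.

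For the converse, given $t \in K\setminus K^2$ and $b \in F^*$ with $F(\sqrt b)$ isomorphic to a maximal subfield of $\cB$, choose an embedding $F(\sqrt b)\hookrightarrow \cB$ and let $\beta \in \cB^*$ be the image of $\sqrt b$, so $\beta^2 = b$ and $\beta F^*$ has order $2$ in $\cB^*/F^*$ (it is nontrivial since $\beta \notin F$). Define $\psi$ to be the inflation to $G_K$ of the map $\Gal(K(\sqrt t)/K) \ra \cB^*/F^*$ sending the nontrivial automorphism to $\beta F^*$; this is a continuous homomorphism with $\im\psi \simeq C_2$. By the computation above, $\delta(\psi)$ has components $\overline{\delta(\psi)} = (t,b)_P$ and $\delta(\psi)_\pm = (t,\sign(b))_K$, so any $\gamma \in H^2(G_K,F^*)[2]$ with these components equals $\delta(\psi)$ (the two components determine the class by Proposition~\ref{prop:descomposicio del H2}), and hence $\gamma \in \Delta_{C_2}$.

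The one genuinely delicate point is the computation of $\overline{\delta(\psi)}$, i.e.\ choosing a coboundary expression $c^2 = \partial d$ and checking that the resulting $d_\sigma \bmod \{\pm1\}F^{*2}$ is well-defined and equals the asserted $(t,b)_P$; here one must be a little careful that the class in $\Hom(G_K,P/P^2)$ does not depend on the choice of $d$ (it differs by a genuine cocycle, which maps to $0$ in $P/P^2$) nor on the representative cocycle $c$ in its cohomology class. Everything else is a direct unwinding of the definitions of $\delta$, of the isomorphism of Proposition~\ref{prop:descomposicio del H2}, and of the notations $(a,b)_K$ and $(t,d)_P$. I would also note explicitly that the condition "$F(\sqrt b)$ isomorphic to a maximal subfield of $\cB$" is exactly what is needed to produce the order-$2$ element $\beta F^*$, matching the "$C_2$ always occurs" part of Proposition~\ref{prop: subgrups finits de B*/F*}.
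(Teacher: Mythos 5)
Your proof is correct and follows essentially the same route as the paper's: unwind the explicit formula for $\delta$ to get the cocycle $c(\sigma,\sigma)=b$, read off $\gamma_\pm$ by applying $\sign$ and $\overline\gamma$ from the coboundary $d_\sigma=b$ via Proposition~\ref{prop:descomposicio del H2}, and reverse the construction for the converse. The only slight addition is your explicit remark that the two components determine the class in $H^2(G_K,F^*)[2]$, which the paper leaves implicit.
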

\begin{proof}
Let $\psi$ be a morphism whose image is isomorphic to $C_2$. Then
the fixed field of $\ker \psi$ is $K(\sqrt{t})$ for some $t\in
K\setminus K^2$, and $\psi$ is the inflation of a morphism (that
we also call $\psi$) from $\Gal(K(\sqrt{t})/K)$, which is
determined by the image of a generator $\sigma$ of the Galois
group. If $\psi(\sigma)= \overline{y}$ (here $\overline{y}$ means
the class of $y$ in $\cB^\times /F^\times $), then $y^2=b\in
F^\times $ and $y\notin F^\times $. That is,  $F(\sqrt{b})$ is
isomorphic to a maximal subfield of $\cB$. From the explicit
description  of $\delta$ given in~\eqref{eq: descripcio explicita
de delta particularitzada a building blocks}, a straightforward
computation shows that a cocycle $c$ representing $\delta(\psi)$
is given by
\begin{equation*}
c(1,1)=c(1,\sigma)=c(\sigma,1)=1,\ \ c(\sigma,\sigma)=b.
\end{equation*}
By taking the sign of this cocycle we obtain a representant for
$\delta(\psi)_\pm$, and it corresponds to the quaternion algebra
$(t,\sign (b))_K$. The cocycle $c^2$ is the coboundary of the map
$1\mapsto  1$, $\sigma\mapsto b$, and by Proposition
~\ref{prop:descomposicio del H2} the component
$\overline{\delta(\psi)}$ is $(t,b)_P$.

Now, for $t\in K\setminus K^2$ and $b\in F^\times $ such that
$F(\sqrt{b})$ is isomorphic to a maximal subfield of $\cB$, take
$y\in \cB$ with $y^2=b$. Then the morphism $\psi\colon
\Gal(K(\sqrt{t})/K)\ra \cB^\times /F^\times $ that sends a
generator $\sigma$ to $\overline{y}$ has image isomorphic to
$C_2$, and by the previous argument the components of
$\delta(\psi)$ are $\delta(\psi)_\pm=(t,\sign(b))_K$ and
$\overline{\delta(\psi)}=(t,b)_P$.
\end{proof}
\begin{proposition}\label{prop: C2xC2}
An element $\gamma\in H^2(G_K,F^\times)[2]$ lies in
$\Delta_{C_2\times C_2}$ if and only if
\begin{itemize}
\item $\overline \gamma=(s,a)_P\cdot (t,b)_P$ for some  $s,t\in K\setminus
K^2$ and $a,b\in F$ such that $a$ is positive and  $\cB\simeq
(a,b)_F$.

\item $\gamma_\pm=(\sign(b) s, t)_K$.
\end{itemize}
\end{proposition}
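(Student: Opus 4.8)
The plan is to argue exactly as in the proof of Proposition~\ref{prop: C2}, treating the two implications in turn, with the extra work that now a $C_2\times C_2$ worth of characters must be tracked, and that the presentation $\cB\simeq(a,b)_F$ has to be normalised so that $a>0$.

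For the direct implication, given a continuous $\psi$ with $\im\psi\simeq C_2\times C_2$ I would first record that the fixed field of $\ker\psi$ is a biquadratic field $L=K(\sqrt s,\sqrt t)$ (so $s,t\in K\setminus K^2$ are multiplicatively independent) and that $\psi$ is inflated from an isomorphism $\Gal(L/K)\ra\im\psi$. Writing $\Gal(L/K)=\{1,\sigma,\tau,\rho\}$ with $\rho=\sigma\tau$, $\sigma$ trivial on $\sqrt t$ (so $\langle\sigma\rangle=\Gal(L/K(\sqrt t))$) and $\tau$ trivial on $\sqrt s$, I would pick lifts $x,y\in\cB^*$ of the images of $\sigma,\tau$. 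Since these images square to $1$ in $\cB^*/F^*$ and are nontrivial, $x,y$ are trace-zero elements with $x^2=a\in F^*$, $y^2=b\in F^*$, and since the images commute while $\cB$ is division (hence has no commutative subfield of degree $>2$ over $F$), $x$ and $y$ must anticommute. Thus $\{1,x,y,xy\}$ is an $F$-basis of $\cB$ and $\cB\simeq(a,b)_F$ — this is the same local analysis of finite subgroups of $\cB^*/F^*$ that underlies Proposition~\ref{prop: subgrups finits de B*/F*}.

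The core computation is then to evaluate $\delta(\psi)$ via~\eqref{eq: descripcio explicita de delta particularitzada a building blocks}. Taking the lifts $\psi_1=1$, $\psi_\sigma=x$, $\psi_\tau=y$, $\psi_\rho=xy$, a direct calculation with $x^2=a$, $y^2=b$, $xy=-yx$ gives a representing $2$-cocycle $c$ whose nontrivial values are
\[
c(\sigma,\sigma)=a,\quad c(\tau,\tau)=b,\quad c(\rho,\rho)=-ab,\quad c(\sigma,\rho)=a,\quad c(\rho,\sigma)=-a,\quad c(\tau,\rho)=-b,\quad c(\rho,\tau)=b,\quad c(\tau,\sigma)=-1 .
\]
From here I would extract the two components as in Proposition~\ref{prop: C2}: the cocycle $c^2$ is the coboundary of the map $1\mapsto1$, $\sigma\mapsto a$, $\tau\mapsto b$, $\rho\mapsto ab$, so Proposition~\ref{prop:descomposicio del H2} gives $\overline{\delta(\psi)}=(s,a)_P\cdot(t,b)_P$; and $\delta(\psi)_\pm$ is the class of $\sign\circ c$. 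Under the normalisation $a>0$, inspection of the table shows that $\sign\circ c$ is precisely the cup-product cocycle $(g,h)\mapsto(-1)^{\chi_t(g)\chi'(h)}$ on $\Gal(L/K)$, with $\chi'=\chi_s$ when $b>0$ and $\chi'=\chi_{st}$ when $b<0$; since a cocycle of this shape represents the quaternion algebra $(u,v)_K$ in $H^2(G_K,\{\pm1\})\cong\Br(K)[2]$, and $(st,t)_K=(-s,t)_K$ (bilinearity of the symbol together with $(t,t)_K=(-1,t)_K$), this yields $\delta(\psi)_\pm=(\sign(b)\,s,t)_K$ in both cases.

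Finally I would dispose of the normalisation: since $\cB\simeq(a,b)_F$ is totally indefinite — in particular split at the real place of $F$ used to define $\sign$ — the pair $a,b$ cannot both be negative there, so after possibly interchanging the roles of $s$ and $t$ (and of $\sigma,\tau$ and of $x,y$) one may assume $a>0$; the formulas above then hold with the roles of $s,t$ and $a,b$ interchanged, which is again of the asserted form. For the converse, given multiplicatively independent $s,t\in K\setminus K^2$ and $\cB\simeq(a,b)_F$ with $a>0$, I would take $i,j\in\cB$ with $i^2=a$, $j^2=b$, $ij=-ji$; then $\bar i,\bar j$ span a copy of $C_2\times C_2$ in $\cB^*/F^*$, and the inflation $\psi$ of the map $\Gal(L/K)\ra\cB^*/F^*$ sending the generator of $\Gal(L/K(\sqrt t))$ to $\bar i$ and that of $\Gal(L/K(\sqrt s))$ to $\bar j$ has $\im\psi\simeq C_2\times C_2$, and by the computation above $\delta(\psi)$ has components $(s,a)_P\cdot(t,b)_P$ and $(\sign(b)\,s,t)_K$. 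The part I expect to be most delicate is the bookkeeping in the cocycle table together with the recognition of $\sign\circ c$ as a cup-product cocycle, since this is exactly where the hypothesis "$a$ positive" is used; everything else is formal manipulation as in the $C_2$ case.
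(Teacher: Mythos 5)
Your proposal is correct and follows essentially the same route as the paper: both proofs reduce to the same explicit cocycle computation once the lifts $x,y$ with $x^2=a$, $y^2=b$, $xy=-yx$ are fixed. The only stylistic difference is in how the cocycle $c=\delta(\psi)$ is organized. The paper factors $c$ as a product of three named cocycles $\gamma_{s,t}\cdot\gamma_{s,a}\cdot\gamma_{t,b}$ (the first representing $(s,t)_K$, the other two of the diagonal type handled in the $C_2$ case) and then multiplies symbols to get $\delta(\psi)_\pm=(s,t)_K\cdot(s,\sign a)_K\cdot(t,\sign b)_K$; you instead write out the full $4\times4$ cocycle table, read off $c^2$ as a coboundary, and then identify $\sign\circ c$ directly as a cup product $(-1)^{\chi_t\cup\chi'}$ with $\chi'$ chosen according to the sign of $b$, using $(st,t)_K=(-s,t)_K$ to merge the two cases. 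These are the same computation carried out in different bookkeeping, and your normalisation step ($\cB$ totally indefinite forces not both $a,b$ negative, so relabel if necessary to get $a>0$) makes explicit what the paper only asserts in passing.
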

\begin{proof}
If $\psi$ is a morphism with image isomorphic to $C_2\times C_2$,
it factorizes through a finite Galois extension $M/K$ with
$\Gal(M/K)\simeq C_2\times C_2$. We write $M$ as
$M=K(\sqrt{s},\sqrt{t})$, and let $\sigma,\tau$ be the generators
of the Galois group such that
$M^{\langle\sigma\rangle}=K(\sqrt{t})$ and
$M^{\langle\tau\rangle}=K(\sqrt{s})$. If $\overline
x=\psi(\sigma)$ and $\overline y=\psi(\tau)$, we know that
$x^2=a\in F^\times $, $y^2=b\in F^\times $ and $xy=\varepsilon y
x$ for some $\varepsilon \in F^\times $. In fact, multiplying this
expression on the left by $x$  we see that necessarily
$\varepsilon = -1$, and hence $\cB\simeq (a,b)_F$.

Let $\gamma_{s,a}$ be the cocycle in $Z^2(\Gal(M/K),F^\times )$
defined as the inflation of the cocycle
\begin{equation*}
\gamma_{s,a}(1,1)=\gamma_{s,a}(\sigma,1)=\gamma_{s,a}(1,\sigma)=1,\
\ \gamma_{s,a}(\sigma,\sigma)=a,
\end{equation*}
and in a similar way we define the cocycle $\gamma_{t,b}$ by means
of
\begin{equation*}
\gamma_{t,b}(1,1)=\gamma_{t,b}(\tau,1)=\gamma_{t,b}(1,\tau)=1,\ \
\gamma_{t,b}(\tau,\tau)=b.
\end{equation*}
Let $\chi_s$ and $\chi_t$ be the elements in
$\Hom(\Gal(M/K),\Z/2\Z)$ defined by
${{^\rho\sqrt{s}}}/{\sqrt{s}}=(-1)^{\chi_s(\rho)}$ and
${{^\rho\sqrt{t}}}/{\sqrt{t}}=(-1)^{\chi_t(\rho)}$, and let
$\gamma_{s,t}$ be the 2-cocycle defined by
$\gamma_{s,t}(\rho,\mu)=(-1)^{\chi_s(\mu)\chi_t(\rho)}$. Then, a
direct computation shows that a cocycle representing
$\delta(\psi)$ is the product of these three 2-cocycles:
$c=\gamma_{s,t}\cdot \gamma_{s,a}\cdot\gamma_{t,b}$. The cocycle
$\gamma_{s,t}$ represents the quaternion algebra $(s,t)_K$, and
then we have that $\delta(\psi)_\pm=(s,t)_K\cdot (s,\sign
(a))_K\cdot (t,\sign (b))_K$. Since $\cB$ is totally indefinite,
we can suppose that $a$ is  positive, and then
$\delta(\psi)_\pm=(\sign(b)\, s,t)_K$. Arguing as in the proof
of~\ref{prop: C2}, the component $\overline{\delta(\psi)}$ is
easily seen to be $(s,a)_P\cdot (t,b)_P$.

Finally, suppose that $\cB\simeq (a,b)_F$ where the element $a$ is
positive. Let $s,t$ be in $K\setminus K^2$, and let $x,y\in \cB$
be such that $x^2=a$, $y^2=b$ and $xy=-yx$. With the same
notations as before for $\Gal(K(\sqrt{s},\sqrt{t})/K)$, the map
$\psi$ that sends $\sigma $ to $\overline x$ and $\tau$ to
$\overline y$ satisfies that $\delta(\psi)_\pm=(\sign(b)\,s,t)_K$
and $\overline{\delta(\psi)}=(s,a)_P\cdot(t,b)_P$.

\end{proof}
\begin{proposition}
Suppose that $\cB^\times /F^\times $ contains a subgroup
isomorphic to $C_n$ for some $n>2$, and let  $\zeta_n$ be a
primitive $n$-th root of unity in $\overline F$ and
$\alpha=2+\zeta_n+\zeta_n^{-1}$. An element $\gamma\in
H^2(G_K,F^\times )$ lies in $\Delta_{C_n}$ if and only if there
exists a cyclic extension $M/K$, with
$\Gal(M/K)=\langle\sigma\rangle$ such that
\begin{itemize}
\item $\overline{\gamma}=(t,\alpha)$, where
$M(\sqrt{t})=M^{\langle\sigma^2\rangle}$.
\item $\gamma_\pm$ is represented by the cocycle
\begin{eqnarray}\label{eq:cocycle cyclic algebra -1}
c_\pm(\sigma^i,\sigma^j)=
\begin{cases}
1 & \text{if $i+j<n$,}
\\
-1 & \text{if $i+j\geq n$,}
\end{cases}
\end{eqnarray}
\end{itemize}
We note that if $n$ is odd then $\Delta_{C_n}=\{1\}$.
\end{proposition}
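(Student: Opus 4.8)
The plan is to compute the class $\delta(\psi)\in H^2(G_K,F^*)$ of an arbitrary continuous morphism $\psi\colon G_K\ra\cB^*/F^*$ with $\im\psi\simeq C_n$, to read off its two components under the isomorphism $H^2(G_K,F^*)[2]\simeq H^2(G_K,\{\pm1\})\times\Hom(G_K,P/P^2)$ of Proposition~\ref{prop:descomposicio del H2}, and finally to check that every pair of components of the asserted shape is realised. Such a $\psi$ factors through a cyclic extension $M/K$ of degree $n$; I fix a generator $\sigma$ of $\Gal(M/K)=\langle\sigma\rangle$ and write $\psi(\sigma)=\overline x$ with $x\in\cB^*$ of order $n$ modulo $F^*$. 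The first step is a normalization of $x$: since $n>2$ the field $F(x)\subseteq\cB$ is quadratic, and one checks that $x/\bar x$ (bar denoting the nontrivial $F$-automorphism of $F(x)$) is a primitive $n$-th root of unity, so $F(x)\simeq F(\zeta_n)$; in particular $\zeta_n+\zeta_n^{-1}\in F$ and $F(\zeta_n)$ is a maximal subfield of $\cB$ (this recovers the Chinburg--Friedman condition). By Skolem--Noether all embeddings of $F(\zeta_n)$ into $\cB$ are $\cB^*$-conjugate, and conjugating $\psi$ does not change $\delta(\psi)$ because the defining cocycle takes values in the centre $F^*$; so I may assume $x\in F(\zeta_n)\subseteq\cB$, and then writing $x$ in the $F$-basis $\{1,\zeta_n^k\}$ and imposing $x/\bar x=\zeta_n^k$ forces $x\in F^*\cdot(1+\zeta_n^k)$ for some $k$ coprime to $n$, i.e. $\overline x=\overline{(1+\zeta_n)^k}$ (using $(1+\zeta_n)(1+\zeta_n^{-1})^{-1}=\zeta_n$).

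With the lifts $(1+\zeta_n)^{ki}$ of $\psi(\sigma^i)$ for $0\leq i<n$, the formula~\eqref{eq: descripcio explicita de delta particularitzada a building blocks} for $\delta$ (with the trivial action) shows that $\delta(\psi)$ is represented by the cyclic-algebra-type cocycle $c(\sigma^i,\sigma^j)=b^{k\varepsilon(i,j)}$, where $b=(1+\zeta_n)^n\in F^*$ and $\varepsilon(i,j)$ is $0$ if $i+j<n$ and $1$ otherwise. Since $b$ is fixed by the conjugation of $F(\zeta_n)/F$, one gets $b^2=b\bar b=\bigl((1+\zeta_n)(1+\zeta_n^{-1})\bigr)^n=\alpha^n$ with $\alpha=\nr(1+\zeta_n)=2+\zeta_n+\zeta_n^{-1}$. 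Hence $c^2=(\alpha^n)^{k\varepsilon}$ is the coboundary of $\sigma^i\mapsto\alpha^{ki}$, so by Proposition~\ref{prop:descomposicio del H2} the map $\overline{\delta(\psi)}$ sends $\sigma^i$ to $\alpha^{ki}\bmod\{\pm1\}F^{*2}$; as $\alpha^{2k}\in F^{*2}$ this kills $\sigma^2$, so it factors through $\Gal(M^{\langle\sigma^2\rangle}/K)$, which for even $n$ is $\Gal(K(\sqrt t)/K)$ and sends its generator to $\alpha^k\equiv\alpha\pmod{\{\pm1\}F^{*2}}$ (recall $k$ is odd when $n$ is even), that is, $\overline{\delta(\psi)}=(t,\alpha)_P$. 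Likewise $\delta(\psi)_\pm=[\sign\circ c]=[(-1)^{k\varepsilon}]$, so the one remaining arithmetic input needed is that $\sign(b)=-1$ for even $n$.

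That fact, and the odd case, are where the actual work is. When $n$ is even, under the complex embedding with $\zeta_n\mapsto e^{2\pi ia/n}$ one has $\gcd(a,n)=1$, hence $a$ odd, and $(1+\zeta_n)^n\mapsto(-1)^a2^n\cos^n(\pi a/n)$, whose sign is $(-1)^a=-1$ (the cosine power is non-negative); equivalently, an order-$n$ elliptic element of $\PGL_2(\R)$ lifts to a matrix whose $n$-th power is a negative scalar. Thus $\sign\circ c$ is exactly the cocycle $c_\pm$ of the statement and $\overline{\delta(\psi)}=(t,\alpha)_P$, giving one implication for even $n$. When $n$ is odd I would argue instead that $\delta(\psi)$ is always trivial: since $\Q(\zeta_{2n})=\Q(\zeta_n)$, $F(\zeta_n)$ contains a square root $\xi$ of $\zeta_n$ that is a primitive $2n$-th root of unity (e.g. $\xi=-\zeta_n^{(n+1)/2}$), and $\xi+\xi^{-1}$ lies in $F$ by the Chebyshev identity, being up to sign $\zeta_n^{(n+1)/2}+\zeta_n^{-(n+1)/2}$, an integral polynomial in $\zeta_n+\zeta_n^{-1}\in F$; then $1+\zeta_n=\xi(\xi+\xi^{-1})$ and $\xi^n=-1$ give $b=-(\xi+\xi^{-1})^n$, which is an $n$-th power in $F^*$ because $-1=(-1)^n$, so $c$ is a coboundary and $\delta(\psi)=1$. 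Hence $\Delta_{C_n}=\{1\}$ for odd $n$ (it is nonempty since $K$ carries cyclic degree-$n$ extensions and $C_n\subseteq\cB^*/F^*$ by hypothesis); this is consistent with the displayed conditions, which for odd $n$ both degenerate to triviality ($M^{\langle\sigma^2\rangle}=K$ forces $t\in K^{*2}$, so $(t,\alpha)_P=1$, and $c_\pm$ is then the coboundary of $\sigma^i\mapsto(-1)^i$).

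For the converse — the substantive case being $n$ even — given a cyclic $M/K$ of degree $n$ and a $2$-torsion $\gamma$ with the two prescribed components, I would take $\psi\colon G_K\twoheadrightarrow\Gal(M/K)\ra\cB^*/F^*$ sending $\sigma$ to $\overline{1+\zeta_n}$ (which exists by Proposition~\ref{prop: subgrups finits de B*/F*}, i.e. the case $k=1$); by the computation above this $\psi$ has $\delta(\psi)_\pm=[c_\pm]$ and $\overline{\delta(\psi)}=(t,\alpha)_P$, and since $\gamma$ and $\delta(\psi)$ are both $2$-torsion with equal images under the isomorphism of Proposition~\ref{prop:descomposicio del H2}, they coincide, so $\gamma\in\Delta_{C_n}$. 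I expect the main obstacle to be exactly the two arithmetic facts about $(1+\zeta_n)^n$ — the sign computation for even $n$, which pins down the specific cocycle $c_\pm$, and the ``$-1$ times an $n$-th power'' identity for odd $n$, which forces $\Delta_{C_n}=\{1\}$ — together with the Skolem--Noether normalization reducing every order-$n$ class to $\overline{(1+\zeta_n)^k}$.
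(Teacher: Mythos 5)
Your proof is correct and follows essentially the same strategy as the paper's: factor $\psi$ through a cyclic $M/K$, compute the cyclic-algebra--type cocycle attached to a lift, normalize the generator to a power of $\overline{1+\zeta_n}$, derive the key identity $a^2=\alpha^n$ from $(1+\zeta_n)^2=\alpha\zeta_n$, and read off the two components under Proposition~\ref{prop:descomposicio del H2}. The differences are local and cosmetic: you re-derive the Chinburg--Friedman normalization yourself (via $x/\bar x$ being a primitive $n$-th root and Skolem--Noether, yielding $\overline x=\overline{(1+\zeta_n)^k}$ with the extra but harmless parameter $k$), whereas the paper simply cites \cite[Lemma 2.1]{C-F} to take $x=1+\zeta$ outright; you determine $\sign(b)=-1$ for even $n$ by a complex-embedding computation $1+e^{2\pi ia/n}=2\cos(\pi a/n)e^{\pi ia/n}$, whereas the paper uses the algebraic identity $a=(1+\zeta)^n=\alpha^{n/2}\zeta^{n/2}=-\alpha^{n/2}$; and for odd $n$ you show the representing cocycle is a coboundary in $F^*$ outright by writing $b=-(\xi+\xi^{-1})^n$ with $\xi^2=\zeta_n$, whereas the paper checks separately that each component is trivial. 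Each of these variants is valid and leads to the same conclusion, so the two proofs are interchangeable in substance.
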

\begin{proof}
Let $\psi$ be a  morphism with image isomorphic to $C_n$. Then the
fixed field for $\ker\psi$ is a cyclic extension $M/K$ with
$\Gal(M/K)=\langle\sigma\rangle$. The element $x\in \cB^\times $
such that $\psi(\sigma)=\overline x$ has the property that $a=x^n$
lies in $F^\times $. Since $\psi(\sigma^i)=\overline{x^i}$, a
straightforward computation shows that $\delta(\psi)$ is given by
\begin{eqnarray}\label{eq:cocicle Cn}
c(\sigma^i,\sigma^j)=
\begin{cases}
1 & \text{if $i+j<n$,}
\\
a & \text{if $i+j\geq n$.}
\end{cases}
\end{eqnarray}
By~\cite[Lemma 2.1]{C-F} we can suppose that $x=1+\zeta$ with
$\zeta\in \cB^\times $  an element of order $n$. We  identify
$\zeta$ with $\zeta_n$ and then by Proposition ~\ref{prop:
subgrups finits de B*/F*} we see that $\zeta+\zeta^{-1}\in
F^\times $. From $(1+\zeta)^2\zeta^{-1}=2+\zeta+\zeta^{-1}$  we
see that $(1+\zeta)^{2n}=(2+\zeta+\zeta^{-1})^n$, and if we define
$\alpha=(2+\zeta+\zeta^{-1})\in F^\times $ we have that
$a^2=x^{2n}=(1+\zeta)^{2n}=\alpha^n$. Therefore, the cocycle $c^2$
is the coboundary of the map $\sigma^{i}\mapsto \alpha^i$, $0\leq
i< n$, and by Proposition \ref{prop:descomposicio del H2} the
component $\overline{\delta(\psi)}$ is the map that sends $\sigma$
to the class of $\alpha$ in $P/P^2$. Clearly $\sigma^2$ is in the
kernel of this map, and since
$\langle\sigma\rangle=\langle\sigma^2\rangle$ if $n$ is odd, then
$\overline{\delta(\psi)}$ is trivial in this case, while if $n$ is
even and $K(\sqrt{t})$ is the fixed field of $M$ by
$\langle\sigma^2\rangle$, then
$\overline{\delta(\psi)}=(t,\alpha)_P$.

A cocycle representing $\delta(\psi)_\pm$ is the sign
of~\eqref{eq:cocicle Cn}. If $n$ is odd, the cohomology class of
this cocycle is always trivial (it is the coboundary of the map
$\sigma^i\mapsto (\sign a)^i$ for $0\leq i<n$). If $n$ is even
then $a$ is negative because
$$a=x^n=(1+\zeta)^n=(2+\zeta+\zeta^{-1})^{n/2}\zeta^{n/2}=-(2+\zeta+\zeta^{-1})^{n/2},$$
and $2+\zeta+\zeta^{-1}$ is  positive due to the identification of
$\zeta$ with $\zeta_n$. This gives that $\delta(\psi)_\pm$ is
given by~\eqref{eq:cocycle cyclic algebra -1}.

Finally, if $t$, $M$, $\sigma$ and $\alpha$ are as in the
statement of the proposition, the map $\psi$ sending $\sigma$ to
$\overline{(1+\zeta)}$  with $\zeta\in B^\times $  an element of
order $n$  gives a $\delta(\psi)$ with the predicted components.
\end{proof}

\begin{proposition}\label{prop: diedral}
Suppose that $\cB^\times /F^\times $ contains a subgroup
isomorphic to $D_{2n}$ for some $n>2$. Let $\zeta_n$ be a
primitive $n$-th root of unity in $\overline F$,
$\alpha=2+\zeta_n+\zeta_n^{-1}$ and
$d=(\zeta_n+\zeta_n^{-1})^2-4$. A cohomology class $\gamma\in
H^2(G_K,F^\times )$ lies in $\Delta_{D_{2n}}$ if and only if there
exists a dihedral extension $M/K$, with
$\Gal(M/K)=\langle\sigma,\tau\ | \
 \sigma^n=1,\tau^2=1,\sigma\tau=\tau\sigma^{-1}\rangle$ such that
\begin{itemize}
\item $\overline \gamma=(s,\alpha)_P\cdot(t,b)_P$, where
$L(\sqrt{s})=M^{\langle\sigma^2,\tau\rangle}$,
$L(\sqrt{t})=M^{\langle\sigma\rangle}$ and $b\in F^\times $
satisfies that $\cB\simeq (d,b)_F$.
\item $\gamma_\pm$ is
 given by the cocycle
\begin{eqnarray}\label{eq:delta psi +- en el cas D2n}
c_\pm(\sigma^i\tau,\sigma^{i'}\tau^{j'})=
\begin{cases}
1 & \text{if $\ \ i-i'\geq 0$}
\\
-1 & \text{if $\ \ i-i'< 0$,}
\end{cases}
\end{eqnarray}
\begin{eqnarray*}
c_\pm(\sigma^i,\sigma^{i'}\tau^{j'})=
\begin{cases}
1 & \text{if $\ \ i+i'< n$}
\\
 -1 & \text{if $\ \ i+i'\geq n$,}
\end{cases}
\end{eqnarray*}
\end{itemize} We note that if $n$ is
odd, then $\overline\gamma=(t,b)_P$ and $\gamma_\pm=1$.
\end{proposition}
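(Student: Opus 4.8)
The plan is to follow the same strategy as in Propositions~\ref{prop: C2}, \ref{prop: C2xC2} and the cyclic case: pick a morphism $\psi\colon G_K\ra \cB^*/F^*$ with $\im\psi\simeq D_{2n}$, identify the fixed field of $\ker\psi$ as a dihedral extension $M/K$ with the presentation $\langle\sigma,\tau\mid\sigma^n=1,\tau^2=1,\sigma\tau=\tau\sigma^{-1}\rangle$, compute a cocycle representing $\delta(\psi)$ explicitly from the formula~\eqref{eq: descripcio explicita de delta particularitzada a building blocks}, and then read off its two components under the decomposition of Proposition~\ref{prop:descomposicio del H2}. First I would set up coordinates: by~\cite[Lemma 2.1]{C-F} one may assume that $\psi(\sigma)=\overline{1+\zeta}$ for $\zeta\in\cB^*$ of order $n$ (so $\sigma$ is sent to the element giving the cyclic piece), and $\psi(\tau)=\overline{y}$ for some $y\in\cB^*$ with $y^2=b\in F^*$, $yx=xy^{-1}$ modulo $F^*$ where $x=1+\zeta$. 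Using the identifications $\zeta\leftrightarrow\zeta_n$, $\zeta+\zeta^{-1}\in F^*$, one computes $x^n = -(2+\zeta+\zeta^{-1})^{n/2}$ as in the cyclic case (for $n$ even), so that the relevant totally negative element appears, and one checks that $y$ conjugates $\zeta$ to $\zeta^{-1}$, forcing $y^2 = d\cdot(\text{square})$ up to $F^{*2}$ with $d=(\zeta_n+\zeta_n^{-1})^2-4 = (\zeta_n-\zeta_n^{-1})^2$, whence $\cB\simeq(d,b)_F$ — this is the structural identity that pins down $b$.

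Next I would write every element of $\Gal(M/K)$ uniquely as $\sigma^i\tau^{j}$ with $0\le i<n$, $j\in\{0,1\}$, and evaluate $c(\sigma^i\tau^j,\sigma^{i'}\tau^{j'}) = \psi(\sigma^i\tau^j)\psi(\sigma^{i'}\tau^{j'})\psi(\sigma^i\tau^j\cdot\sigma^{i'}\tau^{j'})^{-1}$ by plugging in $x^i y^j$ and moving $y$'s past $x$'s using $yx=x^{-1}y$ (up to the sign $\varepsilon$), collecting the powers of $x^n=a$ and of $y^2=b$ that drop out. The bookkeeping splits into the two cases recorded in~\eqref{eq:delta psi +- en el cas D2n}: when the second factor has $\tau$-part and we multiply by $\sigma^i\tau$ on the left we pick up $a^{\pm1}$ according to the sign of $i-i'$; when the first factor has no $\tau$ the analysis reduces to the cyclic carry $i+i'\gtrless n$. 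Taking $\sign$ of this cocycle gives $\gamma_\pm$ directly (here $\sign(a)=-1$ since $a$ is totally negative for $n$ even, and $\sign(b)$ contributes but can be absorbed — the clean form~\eqref{eq:delta psi +- en el cas D2n} should emerge after normalizing, using that $\cB$ is totally indefinite to choose representatives with the right signs). For $\overline\gamma$, I would exhibit $c^2$ as an explicit coboundary: $c^2$ is the coboundary of the map $\sigma^i\tau^j\mapsto \alpha^i b^j$ (with $\alpha=2+\zeta_n+\zeta_n^{-1}$), so by Proposition~\ref{prop:descomposicio del H2}(2) the class $\overline\gamma$ is $\sigma\mapsto\alpha\bmod\{\pm1\}F^{*2}$, $\tau\mapsto b\bmod\{\pm1\}F^{*2}$; identifying the fixed fields $M^{\langle\sigma^2,\tau\rangle}=L(\sqrt s)$ and $M^{\langle\sigma\rangle}=L(\sqrt t)$ this is exactly $(s,\alpha)_P\cdot(t,b)_P$, with the degeneration to $(t,b)_P$ when $n$ is odd because then $\langle\sigma^2,\tau\rangle=\langle\sigma,\tau\rangle$ kills the $\alpha$-part and, moreover, $a=x^n$ is a square times a totally positive element so $\gamma_\pm=1$.

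Finally, for the converse, given a dihedral extension $M/K$ with the stated fixed-field data and $b\in F^*$ with $\cB\simeq(d,b)_F$, I would construct $\psi$ directly: take $\zeta\in\cB^*$ of order $n$, take $y\in\cB^*$ with $y^2=b$ and $y\zeta y^{-1}=\zeta^{-1}$ (such $y$ exists precisely because $\cB\simeq(d,b)_F$, realizing the standard quaternion generators), set $\psi(\sigma)=\overline{1+\zeta}$, $\psi(\tau)=\overline y$; one checks $\psi$ is a well-defined homomorphism with image $D_{2n}$, and the forward computation shows $\delta(\psi)$ has the predicted components. The main obstacle I anticipate is the cocycle bookkeeping in the mixed terms — keeping track of exactly when a factor of $a=x^n$ versus $b=y^2$ is produced as one normalizes $x^iy^j\cdot x^{i'}y^{j'}$ back to the canonical form, and verifying that the resulting sign cocycle agrees on the nose with~\eqref{eq:delta psi +- en el cas D2n} rather than merely being cohomologous to it; a secondary subtlety is justifying the precise identification $\cB\simeq(d,b)_F$ (as opposed to $(d\cdot\square,b)_F$) and the claim that one may normalize so that $\sign$ of the coboundary ambiguity disappears, which is where the total indefiniteness of $\cB$ and the freedom in choosing the representing cocycle are used.
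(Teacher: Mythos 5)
Your plan follows the paper's approach: take $x=1+\zeta$ via \cite[Lemma 2.1]{C-F}, lift $\psi$ by $\sigma^i\tau^j\mapsto x^iy^j$, compute the cocycle explicitly, decompose using Proposition~\ref{prop:descomposicio del H2}, and identify the sign of $a$ for $n$ even via $x^n=-(2+\zeta+\zeta^{-1})^{n/2}$. The architecture is sound, but there are two substantive gaps where the argument would not close as written.

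First, the derivation of $\cB\simeq(d,b)_F$ is garbled. You write that $y\zeta y^{-1}=\zeta^{-1}$ forces ``$y^2=d\cdot(\text{square})$ up to $F^{*2}$''; this is false in general (it would force $b\equiv d\bmod F^{*2}$, which is not what the proposition claims). The correct observation is that $y\zeta y^{-1}=\zeta^{-1}$ implies $y$ \emph{anticommutes} with $\zeta-\zeta^{-1}$, and $(\zeta-\zeta^{-1})^2=(\zeta+\zeta^{-1})^2-4=d$; so the pair $\zeta-\zeta^{-1}$ and $y$ are anticommuting generators with squares $d$ and $b$, whence $\cB\simeq(d,b)_F$. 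This is precisely \cite[Lemma 2.3]{C-F}, which the paper invokes.

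Second, you do not pin down $\sign(b)$, and this cannot be ``absorbed by normalizing''. Replacing the lift $y$ by $\lambda y$ with $\lambda\in F^*$ changes $b$ by $\lambda^2\in F^{*2}$, which leaves $\sign(b)$ unchanged; it is a genuine invariant of $\psi$. If $b$ could be negative, the sign of the cocycle $\gamma_b$ would contribute nontrivially and $\gamma_\pm$ would \emph{not} reduce to the clean form~\eqref{eq:delta psi +- en el cas D2n}. The needed fact is that $b$ is forced to be totally positive: $d$ is totally negative, and $\cB\simeq(d,b)_F$ is totally indefinite (split at every real place of $F$), and $(d,b)_\R$ with $d<0$ is split if and only if $b>0$. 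This is the content of the paper's appeal to \cite[Lemma 2.3]{C-F}. Your instinct that total indefiniteness is relevant is correct, but it is used to constrain $\sign(b)$, not to renormalize a cocycle representative.

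A smaller issue: for $n$ odd your claim that ``$a=x^n$ is a square times a totally positive element'' is not reliable (the sign of $(1+\zeta)^n$ under the fixed real embedding depends on which primitive root $\zeta$ corresponds to). The robust argument, which the paper uses, is that for $n$ odd the cocycle~\eqref{eq:delta psi +- en el cas D2n} is the coboundary of $\sigma^i\tau^j\mapsto(-1)^i$, so $\gamma_\pm=1$ regardless of $\sign(a)$.
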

\begin{proof}
Let $\psi$ be a morphism with image isomorphic to $D_{2n}$. It
factorizes through a dihedral extension $M$ with
$\Gal(M/K)=\langle\sigma,\tau\rangle$ and the relations between
the generators as in the proposition. If we call
$\overline{x}=\psi(\sigma)$, $\overline{y}=\psi(\tau)$, we know
that $x^n=a\in F^\times $, $y^2=b\in F^\times $ and there exist
some $\varepsilon\in F^\times $ such that $xy=\varepsilon
yx^{-1}$. Multiplying in the left by $x^{n-1}$ we find that
$x^ny=\varepsilon^n y x^{-n}$ and hence $\varepsilon^n=a^2$. Now
we show that, in fact, $\varepsilon$ can be identified with
$\alpha$. Indeed, $x=1+\zeta$ with $\zeta\in \cB^\times $ of order
$n$ that we identify with $\zeta_n$, and so
$x^{-1}=(1+\zeta^{-1})(2+\zeta+\zeta^{-1})^{-1}$. Since $F(\zeta)$
is a maximal subfield of $\cB$ different from $F(y)$, the
conjugation by $y$ is a non-trivial automorphism of $F(\zeta)/F$.
The only such automorphism is complex conjugation, which sends
$\zeta$ to $\zeta^{-1}$, and therefore $y^{-1}\zeta y=\zeta^{-1}$.
This implies that $(1+\zeta)y=y(1+\zeta^{-1})$, and this is
$xy=(2+\zeta+\zeta^{-1})yx^{-1}$ which proves that $\varepsilon
=(2+\zeta+\zeta^{-1})$, which is identified  with  $\alpha$.

 In
order to give a compact expression for $\delta(\psi)$ we first
define a cocycle $\gamma_b$:
\begin{equation*}
\gamma_b(\sigma^i\tau^j,\sigma^{i'}\tau^{j'})=
\begin{cases} 1 & \text{if $j+j'<2$,}
\\
b &\text{if $j+j'=2$.}
\end{cases}
\end{equation*}
and a cocycle $e$:
\begin{eqnarray*}
e(\sigma^i\tau,\sigma^{i'}\tau^{j'})=
\begin{cases}
\alpha^{i'} & \text{if $i-i'\geq 0$}
\\
\alpha^{i'} a^{-1} & \text{if $i-i'< 0$,}
\end{cases}\ \ \ \ \
e(\sigma^i,\sigma^{i'}\tau^{j'})=
\begin{cases}
1 & \text{if $i+i'< n$}
\\
 a & \text{if $i+i'\geq n$.}
\end{cases}
\end{eqnarray*}
To compute a cocycle that represents $\delta(\psi)$, we take the
lift  $\tilde\psi$ form $\cB^\times /F^\times $ to $\cB$ given by
$\tilde{\psi}(\sigma^i\tau^j)=x^iy^j$ for $0\leq i < n$, $0\leq j
< 2 $. Then we have that
\begin{eqnarray*}
(\delta(\psi))(\sigma^i\tau,\sigma^{i'}\tau^{j'})&=&\tilde{\psi}(\sigma^i\tau)\tilde{\psi}(\sigma^{i'}\tau^{j'})\tilde{
\psi}(\sigma^i\tau
\sigma^{i'}\tau^{j'})^{-1}\\&=&\tilde{\psi}(\sigma^i\tau)\tilde{\psi}(\sigma^{i'}\tau^{j'})\tilde{\psi}(\sigma^{i-i'}
\tau^{1+j'})^{-1}\\
&=&\begin{cases} x^iyx^{i'}y^{j'}(x^{i-i'}y^{(1+j')\, \text{mod}\,
2})^{-1} & \text{if $i-i'\geq 0$}
\\
x^iyx^{i'}y^{j'}(x^{n+(i-i')}y^{(1+j')\, \text{mod}\, 2})^{-1} &
\text{if $i-i'< 0$}
\end{cases}\\
&=&\begin{cases} \alpha^{i'}x^{i-i'}y^{1+j'}y^{-(1+j')\,
\text{mod}\, 2}\ x^{-(i-i')} & \text{if $i-i'\geq 0$}
\\
 \alpha^{i'}x^{i-i'}y^{1+j'}y^{-(1+j')\,
\text{mod}\, 2}\ x^{-(i-i')}x^{-n} & \text{if $i-i'<0$}
\end{cases}\\
&=&
\begin{cases}
\gamma_b(\sigma^i\tau,\sigma^{i'}\tau^{j'})\alpha^{i'} & \text{if
$i-i'\geq 0$}
\\
 \gamma_b(\sigma^i\tau,\sigma^{i'}\tau^{j'})\alpha^{i'} a^{-1} & \text{if $i-i'< 0$}.
\end{cases}
\end{eqnarray*}
\begin{eqnarray*}
(\delta(\psi))(\sigma^i,\sigma^{i'}\tau^{j'})&=&\tilde{\psi}(\sigma^i)\tilde{\psi}(\sigma^{i'}\tau^{j'})\tilde{\psi}
(\sigma^i
\sigma^{i'}\tau^{j'})^{-1}\\&=&\tilde{\psi}(\sigma^i)\tilde{\psi}(\sigma^{i'}\tau^{j'})\tilde{\psi}(\sigma^{i+i'}\tau^{
j'})^{-1}\\
&=&\begin{cases} x^ix^{i'}y^{j'}(x^{i+i'}y^{j'})^{-1} & \text{if
$i+i'< n$}
\\
x^ix^{i'}y^{j'}(x^{(i+i')-n}y^{j'})^{-1} & \text{if $i+i'\geq n$}
\end{cases}\\
&=&\begin{cases} x^{i+i'}y^{j'}y^{-j'}x^{-(i+i')} & \text{if
$i+i'< n$}
\\
x^{i+i'}y^{j'}y^{-j'}x^{-(i+i')}x^n & \text{if $i+i'\geq n$}
\end{cases}\\
&=&
\begin{cases}
\gamma_b(\sigma^i,\sigma^{i'}\tau^{j'}) & \text{if $i+i'< n$}
\\
 \gamma_b(\sigma^i,\sigma^{i'}\tau^j)\cdot  a & \text{if $i+i'\geq n$}.
\end{cases}
\end{eqnarray*}
From these expressions we see that $\delta(\psi)$ is represented
by the cocycle $\gamma_b\cdot e$. Clearly $\gamma_b$ is 2-torsion
since $\gamma_b^2$ is the coboundary of the map
$d_\gamma(\sigma^i)= 1$, $d_\gamma(\sigma^i\tau)= b$. The cocycle
$e$ is 2-torsion as well, and a coboundary for $e^2$ is given by
the map $d_e(\sigma^i\tau^j)=\alpha^i$.  If we view $d_\gamma$ and
$d_e$ as taking values in $P/P^2$, then by Proposition
\ref{prop:descomposicio del H2} we have that
$\overline{\delta(\psi)}$ is the map $d_e\cdot d_\gamma$. Note
that $\langle\sigma^2,\tau\rangle\subseteq \ker d_e$. If $n$ is
odd, then $\langle\sigma^2,\tau\rangle=\Gal(M/K)$ and the only
contribution to $\overline{\delta(\psi)}$ comes from $d_\gamma$,
and it is the map $(t,b)_P$. If $n$ is even, then the contribution
from $d_e$ is $(s,\alpha)$, and in this case
$\overline{\delta(\psi)}=(s,\alpha)_P\cdot (t,b)_P$.

The component $\delta(\psi)_\pm$ comes from taking the sign in the
cocycle $\gamma_b\cdot e$. The element $b$ is  positive, since
by~\cite[Lemma 2.3]{C-F} we have that $\cB\simeq (d,b)_F$, and $d$
is negative. To determine the sign of $a$, note that from
$\alpha^n=a^2$, we have that if $n$ is even then $\alpha^{n/2}=\pm
a$. The case $\alpha^{n/2}=a$ is not possible since otherwise
$F(x^{n/2},y)$ would be a subfield of $\cB$ of dimension 4 over
$F$. Then $\alpha^{n/2}=-a$ and the fact that $\alpha$ is totally
positive forces $a$ to be  negative. This gives that
$\delta(\psi)_\pm$ is represented by the cocycle~\eqref{eq:delta
psi +- en el cas D2n}. If $n$ is odd then $c_\pm$ is the
coboundary of the map $\sigma^i\tau^j\mapsto (-1)^i$.

As usual, given an extension $M/K$, elements $b\in F^\times $,
$s,t\in K^\times $ and $c_\pm\in Z^2(\Gal(M/K),\{\pm 1\})$ with
the properties described  in  the proposition, one can construct
easily a map $\psi$ with the prescribed $\delta(\psi)$ just
defining $\psi(\sigma)=\overline x$ and $\psi(\tau)=\overline y$,
where $\overline x,\overline y$ generate a subgroup of $\cB^\times
$ isomorphic to $D_{2n}$ and $y^2=b$.
\end{proof}

\section{Examples}\label{sec: examples}
In this section we  illustrate with some examples the use of the
techniques developed so far in studying  the field of definition
of building blocks up to isogeny. We will use the information
provided by the building block table  of~\cite[Section 5.1 of the
Appendix]{quer-MC}. These data  can also be obtained directly by
means of the {\tt Magma} functions implemented by Jordi Quer,
which are based on the packages of William Stein for modular
abelian varieties.

\subsection*{Example.} Let $B$ be the only building block of
dimension 2 with quaternionic multiplication that is associated to
a newform $f$ of level $N=243$ and trivial Nebentypus, and let
$\gamma=[c_B]$ be its
 cohomology class. The components of $\gamma$ are
$\gamma_\pm=1$ and $\overline\gamma=(-3,6)_P$, and
$K_P=\Q(\sqrt{-3})$ is a minimum field of definition of $B$ and of
its endomorphisms up to isogeny. The dimension of $B$ is 2, as it
is the dimension of $A_f$; therefore, we know a priori that $\Q$
is a field of definition of $B$ up to isogeny. Let us see now how
this can also be deduced using our results. The endomorphism
algebra $\cB$ is the quaternion algebra over $\Q$ ramified at the
primes $2$ and $3$. The field $\Q(\sqrt{6})$ is isomorphic to a
maximal subfield of $\cB$, and by Proposition \ref{prop: C2} there
exists a morphism $\psi\colon G_\Q\ra \cB^\times /\Q^\times $ such
that $\overline{\delta(\psi)}=(-3,6)_P$ and
$\delta(\psi)_\pm=(-3,1)_\Q$ which is trivial in $H^2(G_\Q,\{\pm 1
\})$. Therefore $\gamma=\delta(\psi)$ and we deduce the existence
of an abelian variety defined over $\Q$ and isogenous to $B$.

\subsection*{Example.} Let $B$ be the only quaternionic building
block of dimension 2 associated to a modular form $f$ of level
$N=60$ with Nebentypus of order 4. In this case the variety $A_f$
is 4-dimensional and the cohomology class associated to $B$ has
components $\overline{\gamma}=(5,2)_P\cdot(-3,5)_P$, and
$\gamma_\pm$  the quaternion algebra over $\Q$ ramified at the
primes $3$ and $5$.
 The field $K_P=\Q(\sqrt{5},\sqrt{-3})$ is the minimum field of
definition of the variety and of its endomorphisms up to isogeny,
and the algebra $\cB=\End_\Qb^0(B)$ is the quaternion algebra over
$\Q$ ramified at $2$ and $5$, which is isomorphic to $(-2,5)_\Q$.
Hence, by Proposition \ref{prop: C2xC2} there exists a $\psi\colon
G_\Q\ra \cB^\times /\Q^\times $ such that
$\overline{\delta(\psi)}=(5,-2)_P\cdot (-3,5)_P$ and
$\delta(\psi)_\pm=(5,3)_\Q$, which is the quaternion algebra
ramified at $3$ and $5$. Hence $\gamma=\delta(\psi)$ and by
Proposition  \ref{prop: descens del cos de definicio mantenint el
cos de definicio dels endomorfismes} there exists a variety $B_0$
defined over $\Q$ and with all its endomorphisms defined over
$K_P$ that is isogenous to $B$.

\subsection*{Example.} Let $B$ be the only quaternionic  building
block of dimension $2$  associated to a newform $f$ of level
$N=80$ and Nebentypus of order $4$. Now
$\overline\gamma=(5,2)_P\cdot(-4,3)_P$ and $\gamma_\pm$ is the
quaternion algebra over $\Q$ ramified at $2$ and $5$. Again $K_P$,
which in this case is $\Q(\sqrt{5},\sqrt{-1})$, is the minimum
field of definition of $B$ and of its endomorphisms up to isogeny.

First, we observe that there does not exist a variety $B_0$
defined over $\Q$ and with all its endomorphisms defined over
$K_P$. By~\ref{prop: descens del cos de definicio mantenint el cos
de definicio dels endomorfismes} the existence of such variety
would be equivalent to the existence of a $\psi\colon  G_\Q\ra
\cB^\times /\Q^\times $ with image isomorphic to $C_2\times C_2$
such that $\overline{\delta(\psi)}=\overline \gamma$ and
$\delta(\psi)_\pm = \gamma_\pm$. By~\ref{prop: C2xC2},
$\overline{\delta(\psi)}=(s,a)_P\cdot (t,b)_P$ with $\cB\simeq
(a,b)_\Q$. If we want $\overline{\delta(\psi)}=\overline{\gamma}$,
the only possibilities for $a,b$ modulo squares are the following:
$a=2$ and $b=3$, $a=2$ and $b=-3$, $a=-2$ and $ b=3$ or $a=-2$ and
$ b=-3$. Since $\cB$ is the quaternion algebra of discriminant
$6$, only the first two options are possible. But if
$\overline{\delta(\psi)}=(5,2)_P\cdot(-4,3)_P$, from~\ref{prop:
C2xC2} we see that $\delta(\psi)_\pm=(5,-4)_\Q$, which is not
equal to $\gamma_\pm$, and if
$\overline{\delta(\psi)}=(5,2)_P\cdot(-4,-3)_P$ then
$\delta(\psi)_\pm=(-5,-4)_\Q$ which is also not equal to
$\gamma_\pm$. Hence there does not exist such a $\psi$.

Now we will see that there exists a  $\psi\colon G_\Q\ra
\cB^\times /F^\times $ with image isomorphic to  $D_{2\cdot 4}$
such that $\gamma = \delta(\psi)$. This will tell us that there
exists an abelian variety $B_0$ defined over $\Q$ that is
isogenous to $B$, but that does not have all its endomorphisms
defined over $K_P$. First of all, we observe that  $\cB\simeq
(-1,3)_\Q$, and so  $\cB$ contains a maximal subfield isomorphic
to $\Q(i)$, where $i=\sqrt{-1}$. This implies that $\cB^\times
/\Q^\times $ contains subgroups isomorphic to  $D_{2\cdot 4}$.
More precisely, if  $x,y$ are elements in $\cB$ such that
$x^2=-1$, $ y^2=3$, and $xy=-yx$, then the subgroup of $\cB^\times
/\Q^\times $ generated by $\overline{1+x}$ and $\overline y$ is
isomorphic to $D_{2\cdot 4}$.

The number field $M=\Q( {\sqrt[4]{5}},i)$  has $\Gal(M/\Q)\simeq
D_{2\cdot 4}$, generated by the automorphisms $\sigma\colon
{\sqrt[4]{5}}\mapsto i{\sqrt[4]{5}},\ i\mapsto i$ and $\tau\colon
{\sqrt[4]{5}}\mapsto {\sqrt[4]{5}},\ i\mapsto -i$. We define
$\psi\colon G_\Q\ra \cB^\times /F^\times $ as the morphism sending
$\sigma$ to $\overline{1+x}$ and $\tau$ to $\overline y$. From the
expressions given in Proposition \ref{prop: diedral} we see that
$\overline{\delta(\psi)}=(-1,3)_P\cdot(5,2)_P$, which is equal to
$\overline{\gamma}$. It only remains to see that
$\delta(\psi)_\pm=\gamma_\pm$.  Let $D$ be the quaternion algebra
associated to $\delta(\psi)_\pm$. Since $\delta(\psi)_\pm\in
Z^2(\Gal(M/\Q),\{\pm 1\})$ and the extension  $M/\Q$ only ramifies
at the primes $2$ and  $5$, $D$ can only ramify at the places $2$,
$5$ and $\infty$ (see~\cite[Proposition 18.5]{Pi}). We will see
that $D\otimes_\Q\Q(i)$ is not trivial in the Brauer group (and
therefore $D$ ramifies at some prime), and that $D\otimes_\Q
\Q(\sqrt{5})$ is trivial (and therefore $D$ does not ramify at
$\infty$). These two conditions  imply that $D$ ramifies exactly
at $2$ and $5$.

Since $\Gal(M/\Q(i))=\langle\sigma\rangle$, a 2-cocycle $c$
representing $D\otimes_\Q \Q(i)$ is the restriction  to the
subgroup $\langle\sigma\rangle\subseteq \Gal(M/\Q)$ of a cocycle
representing $\delta(\psi)_\pm$. From~\eqref{eq:delta psi +- en el
cas D2n}
 we obtain that
\begin{eqnarray*}
c(\sigma^i,\sigma^{j})=
\begin{cases}
1 & \text{if $i+j< 4$}
\\
 -1 & \text{if $i+j\geq 4$.}
\end{cases}
\end{eqnarray*}
By \cite[Lemma 15.1]{Pi} the algebra associated to this cocycle is
trivial if and only if  $-1\in \mathrm{Nm}_{M/\Q(i)}(M)$, where
$\mathrm{Nm}_{M/\Q(i)}$ refers to the norm in the extension
$M/\Q(i)$. But $-1$  is not a norm of this extension, hence
$D\otimes_\Q \Q(i)$ is non-trivial in the Brauer group.

Since  $\Gal(M/\Q(\sqrt{5}))=\langle\sigma^2,\tau\rangle$, a
2-cocycle $c$ representing $D\otimes_\Q \Q(\sqrt{5})$ is the
restriction to $\langle\sigma^2,\tau\rangle\subseteq \Gal(M/\Q)$
of a cocycle representing $\delta(\psi)_\pm$. Again
from~\eqref{eq:delta psi +- en el cas D2n} we obtain the
following:
\[
\begin{array}{llll}
c(1,1)=1 \ \ & c(\sigma^2,1)=1\ \ &
c(\tau,1)=1 \ \ & c(\sigma^2\tau,1)=1  \\
c(1,\sigma^2)=1 \ \ & c(\sigma^2,\sigma^2)=-1 \ \ &
c(\tau,\sigma^2)=-1 \ \ & c(\sigma^2\tau,\sigma^2)=1  \\
c(1,\tau)=1 \ \ & c(\sigma^2,\tau)=1 \ \ &
c(\tau,\tau)=1 \ \ & c(\sigma^2\tau, \tau)=1  \\
c(1,\sigma^2\tau)=1 \ \ & c(\sigma^2,\sigma^2\tau)=-1 \ \ &
c(\tau,\sigma^2\tau)=-1 \ \ & c(\sigma^2\tau,\sigma^2\tau)=1.
\end{array}
\]
To see that the cohomology class of this cocycle in
$H^2(\Gal(M/\Q(\sqrt{5})),M^\times )$ is trivial (where now the
action is the natural Galois action), we define a map $\lambda$ by
$\lambda(1)=1$, $\lambda(\sigma^2)=i$, $\lambda(\tau)=i$ and
$\lambda(\sigma^2\tau)=-i$. Now a computation shows that
$c(\rho,\mu)=\lambda(\rho)\cdot{^\rho\lambda(\mu)}\cdot\lambda(\rho\mu)^{-1}$,
for all $\rho,\mu\in\Gal(M/\Q(\sqrt{5}))$.

\subsection*{Example.} Consider the building block $B$ in the table
associated with a newform of  conductor 336. For this variety
$\overline{\gamma}=(-3,11)_P$ and $\gamma_\pm$ is the quaternion
algebra ramified at $2$ and $3$. Hence $K_P=\Q(\sqrt{-3})$ and
since  $\Res_\Q^{K_P}(\gamma_\pm)= 1$ we have that $K_P$ is the
minimum field of definition of $B$ and of its endomorphisms up to
isogeny. We will show that $B$ is not isogenous to any variety
defined over $\Q$.

As $K_P$ is a quadratic number field and $\gamma_\pm\neq 1$, the
only morphisms $\psi$ we have to consider are those with image
isomorphic to $C_2$ or to $C_n$ for some even $n>2$. The only such
values of $n$ with $\cB^\times /\Q^\times $ containing a subgroup
isomorphic to $C_n$ are $n=4$ and $n=6$. Since the component
$\overline{\delta(\psi)}$ associated to a $\psi$ with image $C_n$
has the form $(t,2+\zeta_n+\zeta_n^{-1})$, and for $n=4,6$ we have
that $2+\zeta_n+\zeta_n^{-1}$ is not congruent to $ 11$ modulo
$\{\pm 1\}\Q^{*2}$, it turns out that there does not exist any
$\psi$ with image $C_4$ or $C_6$ such that $\gamma= \delta(\psi)$.
If $\psi$ has image $C_2$, the only possibilities are
$\overline{\delta(\psi)}=(-3,11)$ or
$\overline{\delta(\psi)}=(-3,-11)$. In the first case we would
have $\delta(\psi)_\pm=(-3,1)_\Q$ and in the second case
$\delta(\psi)_\pm=(-3,-1)$. In both cases $\delta(\psi)_\pm\neq
\gamma_\pm$, and thus there does not exist a $\psi$ with image
$C_2$ such that $\gamma=\delta(\psi)$.

\subsection*{Acknowledgements}
I am grateful to Jordi Quer for his guidance and help throughout this work. This research was partially supported by
Grants MTM2009-13060-C02-01 and 2009 SGR 1220.

\end{document}